\theoremstyle{plain}
\newtheorem{theorem}{Theorem}[section]
\newtheorem{corollary}[theorem]{Corollary}
\newtheorem{lemma}[theorem]{Lemma}
\newtheorem{proposition}[theorem]{Proposition}
\theoremstyle{remark}
\newtheorem{definition}[theorem]{Definition}
\newtheorem{remark}[theorem]{Remark}
\numberwithin{equation}{section}
\begin{document}

\allowdisplaybreaks

\title[Eigenvalues of the Steklov $(p,2)$-Laplacian]
{Generalized eigenvalues of the $\boldsymbol{(p,2)}$-Laplacian under a parametric boundary condition}

\author{Jamil Abreu}

\address{Departamento de Matem\'atica\\
Universidade Federal de S\~ao Carlos, Rodovia Washington Lu\'is \\ S\~ao Carlos SP \\ Brazil}
\email{jamil@dm.ufscar.br}

\author{Gustavo Madeira}

\address{Departamento de Matem\'atica\\
Universidade Federal de S\~ao Carlos, Rodovia Washington Lu\'is \\ S\~ao Carlos SP \\ Brazil}
\email{gfmadeira@dm.ufscar.br}

\thanks{The first author is supported by PNPD/CAPES}

\keywords{eigenvalue problem, continuous family of eigenvalues, $(p,2)$-Laplacian, Steklov boundary condition, boundary condition with eigenvalue 
parameter}

\subjclass[2010]{Primary: 35D30, 35J60; Secondary: 35P30}

\date\today

\begin{abstract}
In this paper we study a general eigenvalue problem for the so called $(p,2)$-Laplace operator on a smooth bounded domain 
$\Omega\subset \mathbb{R}^N$ under a nonlinear Steklov type boundary condition, namely
\begin{equation}
\left\{
\begin{aligned}
-\Delta_pu-\Delta u                                 & =\lambda a(x)u  \ \ \text{in}\ \Omega,\\
(|\nabla u|^{p-2}+1)\dfrac{\partial u}{\partial\nu} & =\lambda b(x)u \ \ \text{on}\ \partial\Omega .
\end{aligned}
\right.
\end{equation}
For positive weight functions $a$ and $b$ satisfying appropriate integrability and boundedness assumptions, we show that, for all $p>1$, the 
eigenvalue set consists of an isolated null eigenvalue plus a continuous family of eigenvalues located away from zero. 
\end{abstract}

\maketitle

\section{Introduction and main results}

The spectrum of the Laplacian operator under Dirichlet as well as Neumann boundary conditions has a simple description which mathematicians usually 
learn at an early stage of their education. Consider, for instance, the case of Neumann boundary conditions. It can be inferred from a small amount of 
spectral theory that the set of all $\lambda\in \mathbb{R}$ for which there exists a non-zero $u\in W^{1,2}(\Omega)$ such that 
\begin{equation}\label{eq:eigenvalue_problem_laplacian}
\left\{
\begin{aligned}
-\Delta u                       & =\lambda u  \ \ \text{in}\ \Omega,\\
\dfrac{\partial u}{\partial\nu} & =0 \ \ \text{on}\ \partial\Omega ,
\end{aligned}
\right.
\end{equation}
can be arranged in a sequence $(\lambda_n)_{n\geqslant 0}$ of nonnegative real numbers with $\lambda_0=0$ and $\lambda_n\to \infty$. This diagonal 
structure of the Laplacian seems to be classical and difficult to attribute although the use of compactness methods to this end, at least for Dirichlet 
boundary conditions, can be traced back to \cite{rellich1930satz}. Moreover, the first positive eigenvalue can be characterized from a variational point 
of view as
\begin{equation}
\lambda_1^\text{N}=\inf\Big\{\frac{\int_\Omega |\nabla u|^2\, dx}{\int_\Omega u^2 \, dx}:u\in W^{1,2}(\Omega)\backslash \{0\}, 
\int_\Omega u\, dx =0\Big\}.
\end{equation}
For this particular result and an in-depth study of eigenvalue problems for the Laplacian we refer the interested reader to 
\cite{henrot2006extremum}.

Nonlinear eigenvalue problems for the $p$-Laplacian, that is, problems of the form \eqref{eq:eigenvalue_problem_laplacian} with $\Delta$ replaced by the 
$p$-Laplace operator $\Delta_pu=\text{div}\, (|\nabla u|^{p-2}\nabla u)$ have been also extensively studied over the past decades, see e.g. 
\cite{anane1996second}, \cite{drabek1999resonance}, \cite{friedlander1989asymptotic}, \cite{garcia1987existence}, 
\cite{le2006eigenvalue}, \cite{robinson2003average} and references therein. Most investigations rely on variational methods which usually provide 
existence of a principal eigenvalue through minimization of suitable functionals. In \cite{le2006eigenvalue} eigenvalue problems for the $p$-Laplacian subjected to different boundary conditions are studied through 
an unified treatment. It is shown, in particular, that the existence of a sequence as above having a principal eigenvalue which is simple and isolated 
from the remaining (closed) set of eigenvalues hold for the $p$-Laplacian under Dirichlet, Neumann, Robin and Steklov boundary conditions. See also 
\cite[Chapter 9]{motreanu2014topological} and the survey article \cite{drabek2007p} for further information.

In this paper we consider an eigenvalue problem for the $(p,2)$-Laplace operator
\begin{equation}\label{eq:eigenvalue_problem-p2_laplacian}
\left\{
\begin{aligned}
-\Delta_pu-\Delta u                                 & =\lambda a(x)u  \ \ \text{in}\ \Omega,\\
(|\nabla u|^{p-2}+1)\dfrac{\partial u}{\partial\nu} & =\lambda b(x)u \ \ \text{on}\ \partial\Omega ,
\end{aligned}
\right.
\end{equation}
under a nonlinear Steklov boundary condition, that is, a boundary condition which is itself an eigenvalue problem, usually known in the linear case as Steklov eigenvalue problem, since its first appearance in 
\cite{stekloff1902problemes}. Here $\Omega\subset \mathbb{R}^N$ is a bounded domain with smooth boundary and $\nu$ stands for the outward unit normal 
to $\partial\Omega$. Moreover, $a$ and $b$ are given nonnegative functions on $\Omega$ and $\partial\Omega$, respectively, satisfying certain 
integrability conditions and
\begin{equation}\label{eq:int-a+int-b_maior_zero}
\int_\Omega a(x)\, dx+\int_{\partial\Omega} b(x)\, d\sigma > 0 .
\end{equation}
By reflection, this cover the case where both functions are negative with at least one of them being strictly negative 
in a set with positive measure. 

The operator $-\Delta_p-\Delta$ appears e.g. in quantum field theory \cite{benci2000solitons}. From a mathematical point of view it presents several 
difficulties due to its nonhomogeneity. Elliptic equations involving such an operator have been extensively studied over the last years; for instance, 
resonance and existence of nodal solutions for such equations is a current research topic, see e.g. \cite{aizicovici2015nodal}, 
\cite{papageorgiou2015resonantreaction}, \cite{papageorgiou2015resonantconcave} and references therein. Problem \eqref{eq:eigenvalue_problem-p2_laplacian} 
with $a\equiv 1$ and $b\equiv 0$ (Neumann boundary condition) has been studied recently in \cite{farcaseanu2015set} (in the case 
$1<p<2$) and \cite{mihailescu2011eigenvalue} (case $p>2$). Note that condition \eqref{eq:int-a+int-b_maior_zero} is trivially satified in this case. 
These authors have shown that the generalized spectrum for this problem is of `point plus continuum' type, that is, the eigenvalue set 
consists of a zero eigenvalue plus an unbounded open interval with starting point away 
from zero. In particular, there exists a principal eigenvalue but the set of eigenvalues is not closed. In this paper we push 
their analysis further and show that a `point plus continuum' spectrum still holds in a much more general setting (although, probably, not the most
general one, something that will be investigated elsewhere). Many authors have worked on 
eigenvalue problems for the $(p,2)$-Laplacian (and more generally, for the $(p,q)$-Laplacian), most of them under Dirichlet boundary 
conditions, see e.g. \cite{barile2015some}, \cite{tanaka2014generalized} and references therein. To the best of our knowledge the present 
work is the first one dealing with the (generalized) spectrum of the $(p,2)$-Laplacian under Steklov type boundary conditions. We also note that 
the techniques employed in the proof of Theorem \ref{thm:main} do not generalise to the $(p,q)$-Laplacian.

For each $p>1$ define
\begin{multline}\label{eq:first_nonzero_eigenvalue}
\lambda_1(p):=
\inf\Big\{\frac{\frac{1}{p}\int_\Omega|\nabla u|^p\, dx +\frac{1}{2}\int_\Omega |\nabla u|^2\, dx}
{\frac{1}{2}\int_\Omega a(x)u^2 \, dx +\frac{1}{2}\int_{\partial\Omega} b(x)u^2 \, d\sigma}:u\in W^{1,p}(\Omega)\backslash \{0\}, \\
 \int_\Omega a(x) u\, dx+\int_{\partial\Omega} b(x) u\, d\sigma =0\Big\},
\end{multline}
and
\begin{multline}\label{eq:first_nonzero_eigenvalue-W12}
\mu_1(p):=
\inf\Big\{\frac{\frac{1}{p}\int_\Omega|\nabla u|^p\, dx +\frac{1}{2}\int_\Omega |\nabla u|^2\, dx}
{\frac{1}{2}\int_\Omega a(x)u^2 \, dx +\frac{1}{2}\int_{\partial\Omega} b(x)u^2 \, d\sigma}:u\in W^{1,2}(\Omega)\backslash \{0\}, \\
 \int_\Omega a(x) u\, dx+\int_{\partial\Omega} b(x) u\, d\sigma =0\Big\}. 
\end{multline}

We can now state our main result.

\begin{theorem}\label{thm:main}
Let $\Omega\subset \mathbb{R}^N$ be a bounded domain with smooth boundary, $N>2$. Suppose $a$ and $b$ are non-negative measurable functions on 
$\Omega$ and $\partial\Omega$, respectively, and satisfying condition \eqref{eq:int-a+int-b_maior_zero}. Let $\lambda_1(p)$ and $\mu_1(p)$ be the 
numbers defined in Eqs. \eqref{eq:first_nonzero_eigenvalue} and \eqref{eq:first_nonzero_eigenvalue-W12}, respectively.
\begin{enumerate}
\item \label{item:p_bigger_2} If $p>2$, $a\in L^{\frac{N}{2}}(\Omega)$ and $b\in L^{N-1}(\partial\Omega)$ then the set 
of eigenvalues of Problem \eqref{eq:eigenvalue_problem-p2_laplacian} equals  $\{0\}\cup (\lambda_1(p),\infty)$.
\item \label{item:p_less_2} If $1<p<2$ and
\begin{enumerate}
\item \label{item:p_less_2A} either $\frac{2N}{N+1}<p<2$, $a\in L^{\frac{pN}{(p-2)N+2p}}(\Omega)$ and 
$b\in L^{\frac{p(N-1)}{(p-2)N+p}}(\partial\Omega)$,
\item \label{item:p_less_2B} or $\frac{2N}{N+2}<p\leqslant\frac{2N}{N+1}$, $a\in L^{\frac{pN}{(p-2)N+2p}}(\Omega)$ and 
$b\in L^{\infty}(\partial\Omega)$,
\item \label{item:p_less_2C} or $1<p\leqslant\frac{2N}{N+2}$, $a\in L^{\infty}(\Omega)$ and $b\in L^{\infty}(\partial\Omega)$,
\end{enumerate}
then the set of eigenvalues of Problem \eqref{eq:eigenvalue_problem-p2_laplacian} equals 
$\{0\}\cup (\mu_1(p),\infty)$.
\end{enumerate}
\end{theorem}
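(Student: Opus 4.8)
The plan is to split the proof into three parts — the trivial eigenvalue and the sign of the spectrum, the absence of eigenvalues below the threshold, and the realisation of the whole half-line above it. Throughout set $X:=W^{1,p}(\Omega)$ if $p>2$ and $X:=W^{1,2}(\Omega)$ if $1<p<2$ (the space on which $\int_\Omega|\nabla u|^p$ and $\int_\Omega|\nabla u|^2$ are both finite and which carries all admissible test functions), and write $E(u):=\frac1p\int_\Omega|\nabla u|^p\,dx+\frac12\int_\Omega|\nabla u|^2\,dx$, $G(u):=\frac12\int_\Omega au^2\,dx+\frac12\int_{\partial\Omega}bu^2\,d\sigma$, $L(u):=\int_\Omega au\,dx+\int_{\partial\Omega}bu\,d\sigma$ and $V:=\{u\in X:L(u)=0\}$. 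By \eqref{eq:int-a+int-b_maior_zero} one has $L(1)>0$, so $V$ is a closed hyperplane containing no nonzero constant and $X=V\oplus\mathbb{R}$, and a function $u\neq0$ is an eigenfunction of \eqref{eq:eigenvalue_problem-p2_laplacian} with parameter $\lambda$ precisely when $E'(u)=\lambda G'(u)$ in $X^\ast$. Constants satisfy this with $\lambda=0$, so $0$ is an eigenvalue; and if $\lambda<0$, testing with $u$ gives $\int_\Omega|\nabla u|^p+\int_\Omega|\nabla u|^2=2\lambda G(u)\le 0$, forcing $u$ constant, hence $G(u)=0$, hence $u=0$ by \eqref{eq:int-a+int-b_maior_zero}: there are no negative eigenvalues.

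Two facts then do the work. First, in each case of the theorem the integrability hypotheses on $a$ and $b$ are exactly those that make $u\mapsto\int_\Omega au^2+\int_{\partial\Omega}bu^2$ finite and sequentially weakly continuous on $X$ — this is Hölder's inequality together with the Sobolev embedding $W^{1,p}(\Omega)\hookrightarrow L^{p^\ast}(\Omega)$, its trace analogue on $\partial\Omega$ (and the $W^{1,2}$ versions, used when $p$ is small), and the Rellich–Kondrachov theorem; the three subcases of \ref{item:p_less_2} only record for which ranges of $p$ a given pair of exponents is admissible. Hence $G$ and $L$ are weakly continuous, $E$ is convex, weakly lower semicontinuous and coercive on $V$, and a Poincaré inequality $\|u\|_X\le C\|\nabla u\|_{L^{\min\{p,2\}}}$ holds on $V$ (standard compactness argument, using that $V$ has no nonzero constant). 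Second, a scaling computation along the rays $t\mapsto tu$ (letting $t\to0^+$ when $p>2$ and $t\to\infty$ when $p<2$) shows that the relevant threshold $\Lambda$ — namely $\lambda_1(p)$ in case \ref{item:p_bigger_2} and $\mu_1(p)$ in case \ref{item:p_less_2} — equals $\inf\{\|\nabla u\|_{L^2}^2/(2G(u)):u\in V\setminus\{0\}\}$, which is $>0$ by the Poincaré inequality. Consequently, if $\lambda>0$ is an eigenvalue with eigenfunction $u$, testing $E'(u)=\lambda G'(u)$ with the constant $1$ yields $L(u)=0$, so $u\in V\setminus\{0\}$ is non-constant, while testing with $u$ yields $\int_\Omega|\nabla u|^2=2\lambda G(u)-\int_\Omega|\nabla u|^p<2\lambda G(u)$; hence $\Lambda\le\|\nabla u\|_{L^2}^2/(2G(u))<\lambda$. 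Thus every positive eigenvalue exceeds $\Lambda$, and $\Lambda$ itself is not an eigenvalue.

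For the converse, fix $\lambda>\Lambda$. When $p>2$, $J_\lambda:=E-\lambda G$ is coercive (dominant term $\frac1p\|\nabla u\|_{L^p}^p$) and weakly lower semicontinuous on $V$, so it attains a minimum at some $u_\lambda\in V$; since $\lambda>\Lambda=\inf_{V\setminus\{0\}}E/G$ there is $w\in V$ with $J_\lambda(w)<0=J_\lambda(0)$, so $u_\lambda\neq0$, and being an unconstrained minimiser of $J_\lambda$ over the closed subspace $V$ it satisfies $\langle E'(u_\lambda),v\rangle=\lambda\langle G'(u_\lambda),v\rangle$ for every $v\in V$; testing with $1$ (both sides vanish) extends this to all $v\in X$, so $\lambda$ is an eigenvalue. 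When $1<p<2$, $J_\lambda$ is unbounded below on $V$, so I minimise it instead over the Nehari set $\mathcal{N}_\lambda:=\{u\in V\setminus\{0\}:\int_\Omega|\nabla u|^p+\int_\Omega|\nabla u|^2=2\lambda G(u)\}$, which is nonempty (each ray $tu$ with $\|\nabla u\|_{L^2}^2<2\lambda G(u)$ meets it exactly once, and such $u$ exist because $\inf_{V\setminus\{0\}}\|\nabla u\|_{L^2}^2/(2G(u))=\Lambda<\lambda$) and on which $J_\lambda(u)=(\frac1p-\frac12)\int_\Omega|\nabla u|^p>0$. Combining the bound $G(u)\le C\|\nabla u\|_{L^p}^2$ on $V$ (precisely where the case hypotheses on $a,b$ are used) with the defining identity of $\mathcal{N}_\lambda$ gives $\|\nabla u\|_{L^p}\ge\delta_\lambda>0$ on $\mathcal{N}_\lambda$, so $\inf_{\mathcal{N}_\lambda}J_\lambda>0$; a minimising sequence minimises $\|\nabla\cdot\|_{L^p}^p$, hence is bounded in $W^{1,p}$, hence — via the Nehari identity and boundedness of $G$ along it — in $W^{1,2}$, and a weak limit $u_\lambda$ in $X$ is nonzero (weak continuity of $G$), lies in $\mathcal{N}_\lambda$ (a strictly interior limit would project along its ray to a point of $\mathcal{N}_\lambda$ with strictly smaller $J_\lambda$) and realises the infimum. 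Finally, writing $\phi(u):=\langle J_\lambda'(u),u\rangle$, the Lagrange multiplier $\theta$ for the constraint $\phi=0$ obeys $0=\langle J_\lambda'(u_\lambda),u_\lambda\rangle=\theta\langle\phi'(u_\lambda),u_\lambda\rangle=\theta(p-2)\int_\Omega|\nabla u_\lambda|^p$, so $\theta=0$, $J_\lambda'(u_\lambda)=0$ on $V$, and extending by $1$ as before shows $\lambda$ is an eigenvalue. The three parts together give the spectrum $\{0\}\cup(\Lambda,\infty)$.

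The step I expect to be the real obstacle is the compactness analysis underlying case \ref{item:p_less_2}: checking that the sharp exponents in \ref{item:p_less_2A}–\ref{item:p_less_2C} are exactly what makes $\int_\Omega au^2+\int_{\partial\Omega}bu^2$ weakly continuous on the appropriate Sobolev space, controlling a Nehari minimising sequence simultaneously in $W^{1,p}$ (where, $p$ being $<2$, the functional has the good coercivity) and in $W^{1,2}$ (home of the $\Delta u$ term and of $\mu_1(p)$), and executing the passage to the limit inside $\mathcal{N}_\lambda$ without losing the strict positivity of $\inf_{\mathcal{N}_\lambda}J_\lambda$.
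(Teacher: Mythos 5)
Your overall architecture coincides with the paper's (decomposition $X=V\oplus\mathbb{R}$, the scaling identity $\Lambda=\inf\|\nabla u\|_{L^2}^2/(2G(u))$ obtained by letting $t\to0^+$ or $t\to\infty$ along rays, direct minimization of $E-\lambda G$ on $V$ for $p>2$, Nehari minimization for $1<p<2$), and several steps are fine or even slightly tidier than the paper's -- notably your one-shot exclusion of $(0,\Lambda]$ from the spectrum. But there is a genuine gap exactly at the point you flagged as the likely obstacle: the bound $G(u)\leqslant C\|\nabla u\|_{L^p}^2$ on $V$, on which you base both the lower bound $\|\nabla u\|_{L^p}\geqslant\delta_\lambda$ on $\mathscr{N}_\lambda$ (hence $\inf_{\mathscr{N}_\lambda}J_\lambda>0$) and the implication ``bounded in $W^{1,p}$ $\Rightarrow$ $G$ bounded $\Rightarrow$ bounded in $W^{1,2}$'' for the minimizing sequence, is only true in case \ref{item:p_less_2A}. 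There the exponents of $a$ and $b$ are exactly conjugate to $W^{1,p}\hookrightarrow L^{p^*}(\Omega)$ and $W^{1,p}\hookrightarrow L^{p(N-1)/(N-p)}(\partial\Omega)$, and the H\"older computation closes. In case \ref{item:p_less_2B} one has $\int_{\partial\Omega}bu^2\,d\sigma\leqslant\|b\|_{L^\infty}\|u\|_{L^2(\partial\Omega)}^2$, but for $p<\frac{2N}{N+1}$ the trace $W^{1,p}(\Omega)\hookrightarrow L^2(\partial\Omega)$ fails, so $\|u\|_{L^2(\partial\Omega)}^2$ is not controlled by $\|\nabla u\|_{L^p}^2$; in case \ref{item:p_less_2C} even $W^{1,p}(\Omega)\hookrightarrow L^2(\Omega)$ fails for $p<\frac{2N}{N+2}$, so already the bulk term $\|a\|_{L^\infty}\|u\|_{L^2(\Omega)}^2$ escapes control by $\|\nabla u\|_{L^p}^2$ (a concentrating bump in $V$ makes $\|u\|_{L^2}/\|\nabla u\|_{L^p}$ arbitrarily large). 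The $L^\infty$ hypotheses in \ref{item:p_less_2B}--\ref{item:p_less_2C} are not ``another admissible pair of conjugate exponents''; they are there precisely because the $W^{1,p}$-based estimate is unavailable, and $G$ must instead be measured against the $W^{1,2}$ norm.

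The paper closes this gap differently: it first proves (Lemma \ref{lemma:sequence_on_Nehari-bounded_in_W12}) that a Nehari sequence with $\sup_n\int_\Omega|\nabla u_n|^p\,dx<\infty$ is bounded in $W^{1,2}(\Omega)$, by splitting into the case $\sup_n\|u_n\|_{L^2}<\infty$ -- where in cases \ref{item:p_less_2B}--\ref{item:p_less_2C} one uses $\int_{\partial\Omega}u^2\,d\sigma\leqslant\varepsilon\int_\Omega|\nabla u|^2\,dx+c_\varepsilon\int_\Omega u^2\,dx$ and absorbs the $\varepsilon$-term into the left-hand side of the Nehari identity -- and the case $\|u_n\|_{L^2}\to\infty$, which is ruled out by normalizing $v_n=u_n/\|u_n\|_{L^2}$ and deriving $\int_\Omega|\nabla v_0|^p\,dx=0$ for the weak limit, forcing $v_0$ to be a constant in $V$, hence zero, contradicting $\|v_n\|_{L^2}=1$. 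The positivity of $m=\inf_{\mathscr{N}_\lambda}\mathscr{I}_\lambda$ is then obtained by the same normalization-and-contradiction device rather than by a pointwise lower bound for $\|\nabla u\|_{L^p}$ on $\mathscr{N}_\lambda$. You should replace your H\"older-only step by such an argument (or restrict your direct estimate to case \ref{item:p_less_2A} and treat \ref{item:p_less_2B}--\ref{item:p_less_2C} separately). Two smaller points: your stated Poincar\'e inequality $\|u\|_X\leqslant C\|\nabla u\|_{L^{\min\{p,2\}}}$ on $V$ is false for $p>2$ ($\|\nabla u\|_{L^p}$ is not controlled by $\|\nabla u\|_{L^2}$); what you actually need, and what the paper proves via the identity $\int_\Omega au^2+\int_{\partial\Omega}bu^2\leqslant\int_\Omega a(u-\overline{u})^2+\int_{\partial\Omega}b(u-\overline{u})^2$ on $V$, is $G(u)\leqslant C\|\nabla u\|_{L^2}^2$. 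Your Lagrange-multiplier computation $\langle\phi'(u_\lambda),u_\lambda\rangle=(p-2)\int_\Omega|\nabla u_\lambda|^p\,dx\neq0$ is correct and is an acceptable substitute for the paper's explicit fibering map $s\mapsto t(s)(u+sv)$.
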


Later we will be able to find simpler expressions for the numbers $\lambda_1(p)$ and $\mu_1(p)$, cf. Eqs. \eqref{eq:first_eigenvalue-nu} and 
\eqref{eq:nu1}; from Theorem \ref{thm:main}\ref{item:p_less_2} and Eq. \eqref{eq:nu1} we find that the spectrum of the $(p,2)$-Laplacian under the 
conditions stated in Assertion \ref{item:p_less_2C} above actually does not depend on $p$. Assertions \ref{item:p_bigger_2} and \ref{item:p_less_2} are treated with 
different techniques. Section \ref{sec:proof-assertion_a} is devoted to the 
proof of Theorem \ref{thm:main}\ref{item:p_bigger_2} and is based on a standard procedure of associating a weakly lower semicontinuous functional to 
Problem \eqref{eq:eigenvalue_problem-p2_laplacian}. In Section \ref{sec:proof-assertion_b} we carry out the proof of Theorem 
\ref{thm:main}\ref{item:p_less_2} based on minimization over the associated Nehari manifold. 

In the following corollary (with $a\equiv 1$ and $b\equiv 0$) we recover the mains results in \cite{farcaseanu2015set} and \cite{mihailescu2011eigenvalue}.
\begin{corollary}
Let $\Omega\subset \mathbb{R}^N$ be a bounded domain with smooth boundary. Suppose $0\leqslant a\in L^{\infty}(\Omega)$ and 
$0\leqslant b\in L^{\infty}(\partial\Omega)$ are given functions satisfying condition \eqref{eq:int-a+int-b_maior_zero}.
\begin{enumerate}
 \item If $p>2$ then the set 
 of eigenvalues of Problem \eqref{eq:eigenvalue_problem-p2_laplacian} is given by  $\{0\}\cup (\lambda_1(p),\infty)$ where $\lambda_1(p)$ is the 
 number defined in Eq. \eqref{eq:first_nonzero_eigenvalue}.
 \item If $1<p<2$ then the set 
 of eigenvalues of Problem \eqref{eq:eigenvalue_problem-p2_laplacian} is given by  $\{0\}\cup (\mu_1(p),\infty)$ where $\mu_1(p)$ is the number 
 defined in Eq. \eqref{eq:first_nonzero_eigenvalue-W12}.
\end{enumerate}
Moreover, the eigenvalue set of Problem \eqref{eq:eigenvalue_problem-p2_laplacian} does not depend on $p$ when $1<p<2$.
\end{corollary}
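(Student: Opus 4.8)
The plan is to prove Theorem~\ref{thm:main} in two regimes according to the sign of $p-2$, reducing everything to two key facts: (i) the eigenvalue set always contains $0$ as an isolated point, and (ii) every $\lambda$ beyond the relevant variational threshold ($\lambda_1(p)$ when $p>2$, $\mu_1(p)$ when $1<p<2$) is an eigenvalue, while no $\lambda$ strictly between $0$ and that threshold is. Throughout, the natural energy functional attached to \eqref{eq:eigenvalue_problem-p2_laplacian} at parameter $\lambda$ is
\begin{equation*}
J_\lambda(u)=\frac{1}{p}\int_\Omega|\nabla u|^p\,dx+\frac{1}{2}\int_\Omega|\nabla u|^2\,dx-\frac{\lambda}{2}\int_\Omega a(x)u^2\,dx-\frac{\lambda}{2}\int_{\partial\Omega}b(x)u^2\,d\sigma,
\end{equation*}
and a function $u$ is a (weak) eigenfunction for $\lambda$ precisely when $J_\lambda'(u)=0$ with $u\neq 0$. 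The integrability hypotheses on $a$ and $b$ in each case are exactly what is needed so that the maps $u\mapsto\int_\Omega a u^2\,dx$ and $u\mapsto\int_{\partial\Omega}b u^2\,d\sigma$ are well defined and weakly/strongly continuous on the appropriate Sobolev space — via H\"older together with the Sobolev embedding $W^{1,p}\hookrightarrow L^{p^*}$ and the trace embedding $W^{1,p}\hookrightarrow L^{p_*}(\partial\Omega)$ (for $p>2$ one works in $W^{1,p}$; for $1<p<2$ in $W^{1,2}$, since then $W^{1,p}$ functions with finite $\int|\nabla u|^2$ already lie in $W^{1,2}$).

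First I would dispose of $\lambda=0$: testing with $u$ itself forces $\int|\nabla u|^p+\int|\nabla u|^2=0$, so $u$ is constant; by \eqref{eq:int-a+int-b_maior_zero} a nonzero constant is a genuine eigenfunction, so $0$ is an eigenvalue, and an elementary argument (any eigenvalue $\lambda$ with eigenfunction $u$ satisfies, after testing with $u$, $\frac{1}{p}\int|\nabla u|^p+\frac12\int|\nabla u|^2=\frac{\lambda}{2}(\int au^2+\int bu^2)$, and if $u$ is nonconstant the left side is positive while a further estimate bounds $\lambda$ below by $\lambda_1(p)$ or $\mu_1(p)$) shows there are no eigenvalues in the punctured interval $(0,\lambda_1(p))$ resp.\ $(0,\mu_1(p))$; hence $0$ is isolated. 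Next, for $p>2$ (Assertion~\ref{item:p_bigger_2}) I would fix $\lambda>\lambda_1(p)$ and minimize $J_\lambda$ over $W^{1,p}(\Omega)$: since $p>2$ the $p$-growth term dominates and, using the characterization \eqref{eq:first_nonzero_eigenvalue} to produce a test function on which $J_\lambda$ is negative, one gets that $J_\lambda$ is coercive and weakly lower semicontinuous (here the compactness of the embeddings, guaranteed by the strict inequalities in the exponents and by $N>2$, makes the weight terms weakly continuous), so a nonzero minimizer exists and is an eigenfunction for $\lambda$. Conversely $\lambda_1(p)$ itself is shown not to be attained as an eigenvalue by the same testing-with-$u$ identity, so the eigenvalue set is exactly $\{0\}\cup(\lambda_1(p),\infty)$.

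For $1<p<2$ (Assertion~\ref{item:p_less_2}) the functional $J_\lambda$ is no longer coercive on $W^{1,2}$ — the quadratic terms can be overwhelmed by the weight term — so direct minimization fails and one instead minimizes over the Nehari manifold $\mathcal N_\lambda=\{u\neq 0:\langle J_\lambda'(u),u\rangle=0\}$. The point is that for $1<p<2$ each ray $t\mapsto J_\lambda(tu)$ (with $u$ suitably normalized by the constraint $\int au+\int bu=0$, say) has, for $\lambda>\mu_1(p)$, a unique positive critical point which is a global maximum along the ray, so $\mathcal N_\lambda$ is a natural $C^1$ manifold, the infimum of $J_\lambda$ over it is positive and attained (again using weak lower semicontinuity plus the compactness coming from the three cases \ref{item:p_less_2A}--\ref{item:p_less_2C} of exponents, which are tuned precisely so $u\mapsto\int au^2$ and $u\mapsto\int bu^2$ are weakly continuous on $W^{1,2}$), and a Lagrange-multiplier/fibering argument shows the minimizer is a free critical point, hence an eigenfunction. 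As before, the boundary value $\mu_1(p)$ is excluded and no $\lambda\in(0,\mu_1(p))$ works, giving $\{0\}\cup(\mu_1(p),\infty)$. I expect the main obstacle to be the second regime: verifying that the Nehari fibering maps behave as claimed (uniqueness of the critical point along each ray, closedness of $\mathcal N_\lambda$ away from $0$, and that a Nehari minimizer is genuinely critical for $J_\lambda$ and not merely critical on the manifold) is delicate because of the competing non-homogeneous exponents $p<2$, and checking in each of the three exponent windows that the precise $L^q$ hypotheses on $a,b$ deliver the needed compactness requires careful bookkeeping with the Sobolev and trace exponents $p^*=\frac{pN}{N-p}$ and $p_*=\frac{p(N-1)}{N-p}$ (and their $L^2$-analogues $2^*,2_*$).
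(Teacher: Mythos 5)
Your proposal is a sketch of Theorem~\ref{thm:main} itself, but the statement at hand is the Corollary, whose content beyond the theorem is twofold: (i) that $L^\infty$ weights on a bounded domain fall under every integrability hypothesis of the theorem (immediate, since $L^\infty(\Omega)\subset L^{\frac{N}{2}}(\Omega)\cap L^{\frac{pN}{(p-2)N+2p}}(\Omega)$ and likewise on $\partial\Omega$, so the case distinctions \ref{item:p_less_2A}--\ref{item:p_less_2C} collapse), and (ii) the final assertion that for $1<p<2$ the eigenvalue set does not depend on $p$. Point (ii) is not addressed anywhere in your proposal, and it is the only part of the Corollary that requires an argument: one must show that $\mu_1(p)$ is actually independent of $p$. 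The paper does this by identifying $\mu_1(p)$ with the $p$-free quantity $\nu_1$ of Eq.~\eqref{eq:nu1}: for $u\in\mathscr{W}_2\setminus\{0\}$ and $t>0$ one has
\begin{equation}
\mu_1(p)\leqslant \frac{2t^{p-2}}{p}\,\frac{\int_\Omega|\nabla u|^p\,dx}{\int_\Omega a u^2\,dx+\int_{\partial\Omega}b u^2\,d\sigma}
+\frac{\int_\Omega|\nabla u|^2\,dx}{\int_\Omega a u^2\,dx+\int_{\partial\Omega}b u^2\,d\sigma},
\end{equation}
and letting $t\to\infty$ (so $t^{p-2}\to 0$ since $p<2$) gives $\mu_1(p)\leqslant\nu_1$; the reverse inequality is trivial. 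Without this identification the last sentence of the Corollary is unproved.

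There is also a concrete flaw in your treatment of the regime $p>2$: you propose to minimize $J_\lambda$ over all of $W^{1,p}(\Omega)$ and assert coercivity there. That fails, because along the constants $J_\lambda(c)=-\tfrac{\lambda c^2}{2}\bigl(\int_\Omega a\,dx+\int_{\partial\Omega}b\,d\sigma\bigr)\to-\infty$ by \eqref{eq:int-a+int-b_maior_zero}, so $J_\lambda$ is unbounded below on the whole space. The minimization must be carried out on the closed hyperplane $\mathscr{W}_p=\{u:\int_\Omega a u\,dx+\int_{\partial\Omega}b u\,d\sigma=0\}$, where coercivity does hold (this itself needs the auxiliary fact that $\|u-\overline{u}\|_{W^{1,p}}\to\infty$ as $\|u\|_{W^{1,p}}\to\infty$ on $\mathscr{W}_p$); one then recovers a free critical point because $\langle J_\lambda'(u),c\rangle=-\lambda c\bigl(\int_\Omega a u\,dx+\int_{\partial\Omega}b u\,d\sigma\bigr)=0$ automatically for $u\in\mathscr{W}_p$ and $c$ constant. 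Your Nehari-manifold outline for $1<p<2$ is in line with the paper (positivity and attainment of the infimum, projection back onto $\mathscr{N}_\lambda$ via the fibering map, differentiability of $t(s)$), but as written the proposal neither reduces the Corollary to the Theorem nor establishes the $p$-independence claim, so it does not prove the stated result.
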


We observe that our Theorem \ref{thm:main} is not valid with $p=2$; in this case, Problem \eqref{eq:eigenvalue_problem-p2_laplacian} reduces to 
\begin{equation}\label{eq:eigenvalue_problem-p2_laplacian-p_igual_2}
\left\{
\begin{aligned}
-\Delta u                                 & =\dfrac{\lambda}{2} a(x)u  \ \ \text{in}\ \Omega,\\
\dfrac{\partial u}{\partial\nu} & =\frac{\lambda}{2} b(x)u \ \ \text{on}\ \partial\Omega ,
\end{aligned}
\right.
\end{equation}
which is a Steklov problem for the Laplacian whose spectrum has the well known structure described earlier in this introduction.

Let us finish this introduction by explaining the role of the various integrability assumptions on $a$ and $b$. These hypotheses 
are directly related to the well known embeddings $W^{1,r}(\Omega)\hookrightarrow L^q(\Omega)$ which holds in the cases: 
(i) $1\leqslant q\leqslant r^*=\frac{rN}{N-r}$, if $1\leqslant r<N$; (ii) $r\leqslant q<\infty$, if $r=N$; (iii) $q=\infty$, if $r>N$. Moreover, these 
embeddings are compact when $1\leqslant q<r^*$ in case (i), all $q$ in case (ii), and in case (iii) when reinterpreted as 
$W^{1,r}(\Omega)\hookrightarrow C(\overline{\Omega})$. We also have trace embeddings 
$W^{1,r}(\Omega)\hookrightarrow L^q(\partial\Omega)$ for all $1\leqslant r\leqslant q\leqslant \frac{r(N-1)}{N-r}$ if $1\leqslant r<N$ and similarly as 
before in the other ranges of $r$. Details can be found in the standard literature, see e.g. \cite[Chapter 5]{adams2003sobolev} 
or \cite[Chapter 9]{brezis2011functional}. 

\begin{remark}
We can take $N=2$ in Theorem \ref{thm:main}. This is clear with regard to Item \ref{item:p_less_2} and requires small modifications in 
\ref{item:p_bigger_2}. To be precise, we can consider the embedding $W^{1,2}(\Omega)\hookrightarrow L^{q}(\Omega)$ with any $q>2$ and assume that 
$a\in L^{\frac{q}{q-2}}(\Omega)$; if we think of large $q$'s this means that we can take $a\in L^{1+\delta}(\Omega)$ for any $\delta>0$. Similar 
considerations applies to the trace embedding and the corresponding integrability assumptions on $b$. 
\end{remark}

\section{Proof of Theorem \ref{thm:main}\ref{item:p_bigger_2}}\label{sec:proof-assertion_a}

If $p>2$ we have $W^{1,p}(\Omega)\hookrightarrow W^{1,2}(\Omega)$ and it is natural to consider solutions in $W^{1,p}(\Omega)$. For our purposes it will 
be convenient to consider the embeddings $W^{1,r}(\Omega)\hookrightarrow L^{\frac{rN}{N-r}}(\Omega)$ and 
$W^{1,r}(\Omega)\hookrightarrow L^{\frac{r(N-1)}{N-r}}(\partial\Omega)$ with $r=2$. In this case, if $a\in L^{\frac{N}{2}}(\Omega)$ and 
$b\in L^{N-1}(\Omega)$ then integrals such as $\int_\Omega a(x)u^2\, dx$ and $\int_{\partial\Omega} b(x)u^2\, d\sigma$ will be well-defined and 
good estimates can be obtained. Moreover, we must restrict to dimensions $N>2$, which we we assume throughout this section. 

In order to find the Euler-Lagrange equation, and the energy functional, associated to Problem 
\eqref{eq:eigenvalue_problem-p2_laplacian} we formally multiply it by a smooth function $\phi$ to obtain
\begin{align*}
&\lambda \int_\Omega a(x)u\phi \, dx\\
&=-\int_\Omega {\rm div}\,(|\nabla u|^{p-2}\nabla u)\phi\, dx -\int_\Omega (\Delta u)\phi\, dx\\
&=\int_\Omega |\nabla u|^{p-2}\nabla u\cdot \nabla\phi\, dx-\int_{\partial\Omega}|\nabla u|^{p-2}\partial_\nu u\phi\, d\sigma
+\int_\Omega\nabla u\cdot \nabla \phi\, dx-\int_{\partial\Omega}\partial_\nu u\phi\, d\sigma\\
&=\int_\Omega |\nabla u|^{p-2}\nabla u\cdot \nabla\phi\, dx+\int_\Omega\nabla u\cdot \nabla \phi\, dx
-\lambda\int_{\partial\Omega} b(x)\phi\, d\sigma .
\end{align*}
This computation lead us naturally to the following Definition.

\begin{definition}\label{def:eigenvalue-eigenfunction-p2_laplacian}
Let $p>2$. We call $\lambda\in \mathbb{R}$ an {\it eigenvalue} of Problem \eqref{eq:eigenvalue_problem-p2_laplacian} if there exists a non-zero 
$u\in W^{1,p}(\Omega)$ such that
\begin{equation}\label{eq:eigenvalue-eigenfunction-p2_laplacian}
\int_\Omega |\nabla u|^{p-2}\nabla u\cdot \nabla \phi \, dx +\int_\Omega \nabla u\cdot \nabla \phi \, dx =
\lambda \int_\Omega a(x)u\phi \, dx +\lambda \int_{\partial\Omega} b(x)u \phi \, d\sigma
\end{equation}
for all $\phi\in W^{1,p}(\Omega)$. Such a function $u\in W^{1,p}(\Omega)$ will be called an {\it eigenfunction} corresponding to the eigenvalue $\lambda$. 
In other words, $\lambda\in \mathbb{R}$ is an eigenvalue of Problem \eqref{eq:eigenvalue_problem-p2_laplacian} with corresponding eigenfunction 
$u\in W^{1,p}(\Omega)\backslash\{0\}$ if and only if $u$ is a critical point of the $C^1$ functional 
\begin{equation}\label{eq:functional-p2_laplacian}
\mathscr{I}_\lambda(u)=\frac{1}{p}\int_\Omega|\nabla u|^p\, dx +\frac{1}{2}\int_\Omega |\nabla u|^2\, dx 
-\frac{\lambda}{2}\int_\Omega a(x)u^2 \, dx -\frac{\lambda}{2} \int_{\partial\Omega} b(x)u^2 \, d\sigma . 
\end{equation}
\end{definition}

It is well known that the Sobolev space $W^{1,p}(\Omega)$ can be decomposed as a direct sum
\begin{equation}\label{eq:W1p-igual-Vp+R}
W^{1,p}(\Omega)=\mathscr{V}_p\oplus \mathbb{R}, 
\end{equation}
where $\mathscr{V}_p$ is the closed subspace consisting of all mean zero elements in $W^{1,p}(\Omega)$, that is,
$$
\mathscr{V}_p:=\Big\{u\in W^{1,p}(\Omega):\int_\Omega u\, dx =0\Big\}.
$$
One of the main advantages of the decomposition \eqref{eq:W1p-igual-Vp+R} relies on the fact that, for elements in $\mathscr{V}_p$, 
Poincar\'e-Wirtinger inequality takes its simplest form, namely
\begin{equation}
\int_\Omega|u|^p\, dx\leqslant C^\text{P}_p\int_{\Omega}|\nabla u|^p\, dx\ \ \ (u\in \mathscr{V}_p). 
\end{equation}

For our purposes, however, it will be convenient to introduce another decomposition. Let 
\begin{equation}
\mathscr{W}_p:=\Big\{u\in W^{1,p}(\Omega):\int_\Omega a(x)u\, dx+\int_{\partial\Omega} b(x)u\, d\sigma =0\Big\}.
\end{equation}

\begin{lemma}\label{lemma:W1p-igual-Vptil+R}
Let $p> 2$. Then $\mathscr{W}_p$ is a closed subspace of $W^{1,p}(\Omega)$ and we have the decomposition
\begin{equation}\label{eq:W1p-igual-Vptil+R}
W^{1,p}(\Omega)=\mathscr{W}_p\oplus \mathbb{R}.
\end{equation}
\end{lemma}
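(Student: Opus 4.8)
The plan is to realize $\mathscr{W}_p$ as the kernel of a bounded linear functional on $W^{1,p}(\Omega)$ that does not annihilate the constants. Define $L\colon W^{1,p}(\Omega)\to\mathbb{R}$ by
\begin{equation*}
L(u)=\int_\Omega a(x)u\,dx+\int_{\partial\Omega} b(x)u\,d\sigma ,
\end{equation*}
so that, by construction, $\mathscr{W}_p=\ker L$. Once we check that $L$ is well defined and continuous, $\mathscr{W}_p$ is automatically a closed subspace; once we check that $L$ does not vanish identically, a one–line linear-algebra argument produces the splitting $W^{1,p}(\Omega)=\mathscr{W}_p\oplus\mathbb{R}$.

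For the continuity of $L$, since $p>2$ the inclusion $W^{1,p}(\Omega)\hookrightarrow W^{1,2}(\Omega)$ is bounded, so it suffices to estimate $L$ on $W^{1,2}(\Omega)$. By Hölder's inequality, $\bigl|\int_\Omega a(x)u\,dx\bigr|\le\|a\|_{L^{N/2}(\Omega)}\,\|u\|_{L^{2^*}(\Omega)}$ with $2^*=\frac{2N}{N-2}$, and the right-hand side is controlled by $\|u\|_{W^{1,2}(\Omega)}$ via the Sobolev embedding $W^{1,2}(\Omega)\hookrightarrow L^{2^*}(\Omega)$ recalled in the introduction; since $N\geqslant 3$ we have $b\in L^{N-1}(\partial\Omega)\subseteq L^{2}(\partial\Omega)$, so $\bigl|\int_{\partial\Omega} b(x)u\,d\sigma\bigr|\le\|b\|_{L^{2}(\partial\Omega)}\,\|u\|_{L^{2}(\partial\Omega)}$, again controlled by $\|u\|_{W^{1,2}(\Omega)}$ via the trace embedding $W^{1,2}(\Omega)\hookrightarrow L^{2}(\partial\Omega)$. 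Combining these with the boundedness of $W^{1,p}(\Omega)\hookrightarrow W^{1,2}(\Omega)$ yields $|L(u)|\le C\|u\|_{W^{1,p}(\Omega)}$, hence $L\in\bigl(W^{1,p}(\Omega)\bigr)^{*}$ and $\mathscr{W}_p=\ker L$ is closed.

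It remains to exhibit the direct sum. Evaluating $L$ on the constant function $u\equiv 1$ gives $L(1)=\int_\Omega a(x)\,dx+\int_{\partial\Omega} b(x)\,d\sigma>0$ by hypothesis \eqref{eq:int-a+int-b_maior_zero}. Thus $L$ is nontrivial on the line $\mathbb{R}\subset W^{1,p}(\Omega)$, so $\mathscr{W}_p\cap\mathbb{R}=\{0\}$; and for an arbitrary $u\in W^{1,p}(\Omega)$, putting $c=L(u)/L(1)$ we get $L(u-c)=L(u)-c\,L(1)=0$, i.e.\ $u-c\in\mathscr{W}_p$, whence $u=(u-c)+c\in\mathscr{W}_p+\mathbb{R}$. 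This establishes \eqref{eq:W1p-igual-Vptil+R}. The only step that is not pure formalism is the boundedness of $L$, which is precisely where the integrability hypotheses on $a$ and $b$ enter through the Sobolev and trace exponents; I do not expect any genuine obstacle there.
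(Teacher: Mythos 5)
Your proof is correct and follows essentially the same route as the paper: both realize $\mathscr{W}_p$ as the kernel of the bounded linear functional $u\mapsto \int_\Omega a(x)u\,dx+\int_{\partial\Omega} b(x)u\,d\sigma$ (continuity via the $W^{1,2}(\Omega)$ Sobolev and trace embeddings, available since $p>2$ and $N>2$) and use condition \eqref{eq:int-a+int-b_maior_zero} to keep the constants out of the kernel. The only cosmetic differences are that the paper first applies a weighted Cauchy--Schwarz inequality to reduce to the quadratic forms $\int_\Omega a(x)u^2\,dx$ and $\int_{\partial\Omega} b(x)u^2\,d\sigma$ (estimates it reuses later in Lemma \ref{lemma:lambda1_positivo}), and that your H\"older step for the volume term silently absorbs a factor $|\Omega|^{(N-2)/(2N)}$ because $N/2$ and $2^{*}$ are not conjugate exponents --- harmless on a bounded domain.
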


\begin{proof}
Let $\varphi:W^{1,p}(\Omega)\to \mathbb{R}$ be defined by $\varphi(u)=\int_\Omega a(x)u\, dx+\int_{\partial\Omega} b(x)u\, d\sigma $. Then
\begin{align*}
|\varphi(u)|&\leqslant \Big(\int_\Omega a(x)\, dx\Big)^{\frac{1}{2}}\Big(\int_\Omega a(x)u^2\, dx\Big)^{\frac{1}{2}}+
\Big(\int_{\partial\Omega} b(x)\, d\sigma\Big)^{\frac{1}{2}}\Big(\int_{\partial\Omega} b(x)u^2\, d\sigma\Big)^{\frac{1}{2}}\\
& \leqslant \widetilde{C}\Big(\int_\Omega a(x)u^2\, dx+\int_{\partial\Omega} b(x)u^2\, d\sigma\Big)^{\frac{1}{2}}
\end{align*}
with 
$\widetilde{C}=\sqrt{2}\max\big\{\big(\int_\Omega a(x)\, dx\big)^{\frac{1}{2}} ,\big(\int_{\partial\Omega} b(x)\, d\sigma\big)^{\frac{1}{2}}\big\}$. 
We have
$$
\int_\Omega a(x)u^2\, dx\leqslant \|a\|_{L^{\frac{N}{2}}(\Omega)}\Big[\Big(\int_\Omega|u|^{\frac{2N}{N-2}}\, dx\Big)^{\frac{N-2}{2N}}\Big]^2
\leqslant C_1\|a\|_{L^{\frac{N}{2}}(\Omega)}\|u\|_{W^{1,2}(\Omega)}^2
$$
and 
\begin{multline}
\int_{\partial\Omega} b(x)u^2\, d\sigma\leqslant \|b\|_{L^{N-1}(\partial\Omega)}
\Big[\Big(\int_{\partial\Omega}|u|^{\frac{2(N-1)}{N-2}}\, d\sigma\Big)^{\frac{N-2}{2(N-1)}}\Big]^2\\
\leqslant C_2\|b\|_{L^{N-1}(\partial\Omega)}\|u\|_{W^{1,2}(\Omega)}^2.
\end{multline}
Here $C_1$ and $C_2$ are the Sobolev and trace constants for the embeddings mentioned in the beginning of this section. Thus $\varphi$ 
belongs to $\big(W^{1,p}(\Omega)\big)^*$ and then $\mathscr{W}_p=\ker \varphi$ is a closed hyperplane. Moreover, condition 
\eqref{eq:int-a+int-b_maior_zero} implies that constant functions lie outside of $\mathscr{W}_p$. This proves the decomposition 
\eqref{eq:W1p-igual-Vptil+R}.
\end{proof}

\begin{remark}\label{rmk:eigenfunction_lies_on_Vptil}
If $u$ is an eigenfunction corresponding to a non-zero eigenvalue then, by testing Eq. \eqref{eq:eigenvalue-eigenfunction-p2_laplacian} 
against a constant function, we find that $u\in \mathscr{W}_p$. This is the main motivation for introducing the space $\mathscr{W}_p$.
\end{remark}

We observe that, with the notations just introduced, the definition of $\lambda_1(p)$ in Eq. \eqref{eq:first_nonzero_eigenvalue} can be reformulated 
as
\begin{equation}\label{eq:first_nonzero_eigenvalue-Vtil}
\lambda_1(p):=
\inf_{u\in \mathscr{W}_p\backslash \{0\}}\frac{\frac{1}{p}\int_\Omega|\nabla u|^p\, dx +\frac{1}{2}\int_\Omega |\nabla u|^2\, dx}
{\frac{1}{2}\int_\Omega a(x)u^2 \, dx +\frac{1}{2}\int_{\partial\Omega} b(x)u^2 \, d\sigma}. 
\end{equation}

The proof of Theorem \ref{thm:main}\ref{item:p_bigger_2} will follow as a consequence of several intermediate results, most of them being of independent 
interest. The following elementary result already establishes almost half of our main result. Although we state it under the assumption that $p>2$, to be consistent 
with Definition \ref{def:eigenvalue-eigenfunction-p2_laplacian}, the reader will notice that all arguments would work quite well for all $p>1$. We 
will need this later.

\begin{lemma}\label{lemma:eigenvalue_is_nonegative}
Let $p>2$.
\begin{enumerate}
\item\label{item:0-eigenvalue} $\lambda =0$ is an eigenvalue of Problem \eqref{eq:eigenvalue_problem-p2_laplacian}.
\item\label{item:negative-not-eigenvalue} No number $\lambda<0$ is an eigenvalue of Problem \eqref{eq:eigenvalue_problem-p2_laplacian}.
\end{enumerate}
\end{lemma}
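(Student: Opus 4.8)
The plan is to establish the two assertions by elementary arguments, since neither requires the variational machinery developed later. For assertion \ref{item:0-eigenvalue}, I would simply exhibit an eigenfunction: take $u\equiv 1$, a nonzero constant function in $W^{1,p}(\Omega)$. Then $\nabla u\equiv 0$, so the left-hand side of Eq. \eqref{eq:eigenvalue-eigenfunction-p2_laplacian} vanishes for every test function $\phi$, and with $\lambda=0$ the right-hand side vanishes as well. Hence $\lambda=0$ is an eigenvalue with eigenfunction $u\equiv 1$. (Equivalently, constants are critical points of $\mathscr{I}_0$.)

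For assertion \ref{item:negative-not-eigenvalue}, I would argue by contradiction. Suppose $\lambda<0$ is an eigenvalue with eigenfunction $u\in W^{1,p}(\Omega)\setminus\{0\}$. Testing Eq. \eqref{eq:eigenvalue-eigenfunction-p2_laplacian} against $\phi=u$ itself gives
\begin{equation*}
\int_\Omega |\nabla u|^p\, dx +\int_\Omega |\nabla u|^2\, dx = \lambda\Big(\int_\Omega a(x)u^2\, dx +\int_{\partial\Omega} b(x)u^2\, d\sigma\Big).
\end{equation*}
The left-hand side is nonnegative. Since $a,b\geqslant 0$, the quantity in parentheses on the right is nonnegative, and because $\lambda<0$ the right-hand side is nonpositive. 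Therefore both sides equal zero. From the left-hand side being zero we get $\nabla u\equiv 0$ a.e.\ in $\Omega$, so $u$ is constant on (each connected component of) $\Omega$; since $\Omega$ is a domain, $u\equiv c$ for some constant $c$, and $c\neq 0$ as $u$ is nonzero. From the right-hand side being zero we obtain $c^2\big(\int_\Omega a\,dx+\int_{\partial\Omega} b\,d\sigma\big)=0$, forcing $\int_\Omega a(x)\,dx+\int_{\partial\Omega} b(x)\,d\sigma=0$, which contradicts condition \eqref{eq:int-a+int-b_maior_zero}. Hence no negative $\lambda$ can be an eigenvalue.

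Neither step presents a real obstacle; the only point requiring a moment's care is the use of connectedness of $\Omega$ to pass from $\nabla u\equiv 0$ to $u$ being a single constant, and then invoking \eqref{eq:int-a+int-b_maior_zero} at precisely the right place to close the contradiction. As noted in the statement, the same computation works verbatim for any $p>1$, the restriction $p>2$ being imposed only so that Eq.\ \eqref{eq:eigenvalue-eigenfunction-p2_laplacian} and Definition \ref{def:eigenvalue-eigenfunction-p2_laplacian} make sense in $W^{1,p}(\Omega)$.
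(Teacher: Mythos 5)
Your proof is correct and takes essentially the same approach as the paper: constants furnish the eigenfunction for $\lambda=0$, and testing Eq.~\eqref{eq:eigenvalue-eigenfunction-p2_laplacian} against $\phi=u$ rules out $\lambda<0$. The only difference is that you spell out the degenerate case in which both sides of the tested identity vanish (using connectedness of $\Omega$ and condition \eqref{eq:int-a+int-b_maior_zero}), a detail the paper's proof passes over and instead addresses in the remark immediately following it.
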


\begin{proof}
Assertion \ref{item:0-eigenvalue} is immediate since Eq. \eqref{eq:eigenvalue-eigenfunction-p2_laplacian} is obviously satisfied when $\lambda =0$ 
and $u$ is a constant function. To prove Assertion \ref{item:negative-not-eigenvalue}, suppose $\lambda$ is a nonzero eigenvalue with corresponding 
eigenfunction $u_\lambda$. By testing Eq. \eqref{eq:eigenvalue-eigenfunction-p2_laplacian} against $\phi=u_\lambda$ yields
\begin{equation}
\int_\Omega|\nabla u_\lambda|^p\, dx+\int_\Omega|\nabla u_\lambda|^2\, dx=
\lambda\Big(\int_\Omega a(x)u_\lambda^2\, dx+\int_{\partial\Omega} b(x)u_\lambda^2\, d\sigma\Big)
\end{equation}
thus $\lambda > 0$. This shows that no eigenvalue can be strictly negative.
\end{proof}

\begin{remark}[Null eigenvalues versus constant eigenvectors]
Let us clarify the (easy) relation between null eigenvalues and constant eigenvectors, which appears in the proof above. On the one hand, if Eq. 
\eqref{eq:eigenvalue-eigenfunction-p2_laplacian} is satisfied by $\lambda =0$, some $u\in W^{1,p}(\Omega)\backslash \{0\}$ and all 
$\phi\in W^{1,p}(\Omega)$ then (by testing $\phi=u$) we find that $u$ is constant (by Poincaré inequality). On the other hand, if Eq. 
\eqref{eq:eigenvalue-eigenfunction-p2_laplacian} is satisfied by some $\lambda\in \mathbb{R}$ (we can take $\lambda\geqslant 0$, by Lemma 
\ref{lemma:eigenvalue_is_nonegative}), some non-zero constant $u$ and all $\phi\in W^{1,p}(\Omega)$ then (again by testing $\phi=u$) we find that $\lambda=0$.
\end{remark}

\begin{lemma}\label{lemma:lambda1_positivo}
$\lambda_1(p)>0$ for all $p>2$.
\end{lemma}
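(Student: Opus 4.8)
The plan is to reduce the statement to a Poincar\'e-type inequality on the space $\mathscr{W}_p$ and then read off a positive lower bound for $\lambda_1(p)$ directly from the embedding estimates already used in Lemma \ref{lemma:W1p-igual-Vptil+R}.

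The key step is to show that there is a constant $C>0$ with $\|u\|_{W^{1,2}(\Omega)}\leqslant C\|\nabla u\|_{L^2(\Omega)}$ for all $u\in\mathscr{W}_p$ (this makes sense since $p>2$ gives $W^{1,p}(\Omega)\hookrightarrow W^{1,2}(\Omega)$). Given such a $u$, I would write $u=v+c$ along $W^{1,2}(\Omega)=\mathscr{V}_2\oplus\mathbb{R}$, with $v$ of zero mean and $c\in\mathbb{R}$; then $\nabla v=\nabla u$, so Poincar\'e--Wirtinger gives $\|v\|_{W^{1,2}(\Omega)}\leqslant C_1\|\nabla u\|_{L^2(\Omega)}$. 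To control $c$ I would use the constraint defining $\mathscr{W}_p$: from $\int_\Omega au\,dx+\int_{\partial\Omega}bu\,d\sigma=0$ one gets $c\bigl(\int_\Omega a\,dx+\int_{\partial\Omega}b\,d\sigma\bigr)=-\bigl(\int_\Omega av\,dx+\int_{\partial\Omega}bv\,d\sigma\bigr)$, and the Cauchy--Schwarz and Sobolev/trace estimates from the proof of Lemma \ref{lemma:W1p-igual-Vptil+R} bound the right-hand side by $C_2\|v\|_{W^{1,2}(\Omega)}$; since the coefficient $\int_\Omega a\,dx+\int_{\partial\Omega}b\,d\sigma$ is positive by \eqref{eq:int-a+int-b_maior_zero}, this gives $|c|\leqslant C_3\|\nabla u\|_{L^2(\Omega)}$, whence $\|u\|_{W^{1,2}(\Omega)}\leqslant\|v\|_{W^{1,2}(\Omega)}+|c|\,|\Omega|^{1/2}\leqslant C\|\nabla u\|_{L^2(\Omega)}$.

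With this inequality the conclusion is immediate. Every $u\in\mathscr{W}_p\setminus\{0\}$ is nonconstant (constants lie outside $\mathscr{W}_p$ by \eqref{eq:int-a+int-b_maior_zero}), so $\int_\Omega|\nabla u|^2\,dx>0$. If moreover $\int_\Omega au^2\,dx+\int_{\partial\Omega}bu^2\,d\sigma>0$, the Sobolev/trace estimates recalled in Lemma \ref{lemma:W1p-igual-Vptil+R} give $\int_\Omega au^2\,dx+\int_{\partial\Omega}bu^2\,d\sigma\leqslant C_4\|u\|_{W^{1,2}(\Omega)}^2\leqslant C_4C^2\|\nabla u\|_{L^2(\Omega)}^2$, so discarding the $p$-term in the numerator of \eqref{eq:first_nonzero_eigenvalue-Vtil} shows that the quotient there is at least $\bigl(\tfrac12\int_\Omega|\nabla u|^2\,dx\bigr)/\bigl(\tfrac12C_4C^2\int_\Omega|\nabla u|^2\,dx\bigr)=(C_4C^2)^{-1}$. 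Since these are exactly the $u$ giving finite values in \eqref{eq:first_nonzero_eigenvalue-Vtil}, taking the infimum yields $\lambda_1(p)\geqslant(C_4C^2)^{-1}>0$.

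The step I expect to be the genuine obstacle is the Poincar\'e-type inequality on $\mathscr{W}_p$: since $\mathscr{W}_p$ is not the mean-zero subspace, it is not the bare Poincar\'e--Wirtinger inequality, and the constant part of $u$ must be controlled separately --- which is exactly where membership in $\mathscr{W}_p$ and condition \eqref{eq:int-a+int-b_maior_zero} come in, through the boundedness of the functional $u\mapsto\int_\Omega au\,dx+\int_{\partial\Omega}bu\,d\sigma$ established in Lemma \ref{lemma:W1p-igual-Vptil+R}. Everything else is routine book-keeping with the embedding constants. (Alternatively, one can argue by contradiction: a null infimum would produce a minimizing sequence $u_n\in\mathscr{W}_p$ with $\int au_n^2+\int_{\partial\Omega}bu_n^2=1$ and $\|\nabla u_n\|_{L^2}\to0$, and the same decomposition forces $\|u_n\|_{W^{1,2}}\to0$, hence $\int au_n^2+\int_{\partial\Omega}bu_n^2\to0$, a contradiction.)
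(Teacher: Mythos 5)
Your argument is correct, and it shares the paper's overall strategy (decompose $u\in\mathscr{W}_p$ as $(u-\overline{u})+\overline{u}$, use the constraint to handle the constant component, then apply the Sobolev/trace bounds from Lemma \ref{lemma:W1p-igual-Vptil+R} together with Poincar\'e--Wirtinger), but the mechanism for disposing of the constant part is genuinely different. You solve the constraint for $\overline{u}$, obtaining $|\overline{u}|\leqslant C\|\nabla u\|_{L^2(\Omega)}$ after dividing by $\int_\Omega a\,dx+\int_{\partial\Omega}b\,d\sigma$, and thereby prove the stronger intermediate fact that $\|\cdot\|_{W^{1,2}(\Omega)}$ and $\|\nabla\cdot\|_{L^2(\Omega)}$ are equivalent on $\mathscr{W}_p$ --- a reusable Poincar\'e-type inequality, at the price of a constant that degenerates as $\int_\Omega a\,dx+\int_{\partial\Omega}b\,d\sigma\to 0$. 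The paper instead never estimates $\overline{u}$ at all: expanding $u^2\leqslant(u-\overline{u})^2+2u\overline{u}$ and integrating against $a$ and $b$, the cross term is exactly $2\overline{u}\bigl(\int_\Omega au\,dx+\int_{\partial\Omega}bu\,d\sigma\bigr)=0$ by membership in $\mathscr{W}_p$, which yields the comparison \eqref{eq:estimate-integral-of-au2+bu2} directly and a lower bound for $\lambda_1(p)$ depending only on $\|a\|_{L^{N/2}(\Omega)}$, $\|b\|_{L^{N-1}(\partial\Omega)}$ and the embedding constants, with no dependence on how small $\int_\Omega a\,dx+\int_{\partial\Omega}b\,d\sigma$ is (positivity of that quantity is needed only to exclude constants from $\mathscr{W}_p$). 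Your handling of the degenerate denominators in \eqref{eq:first_nonzero_eigenvalue-Vtil} is fine, and the parenthetical compactness-free contradiction argument is a valid repackaging of the same estimate.
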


\begin{proof}
We claim that
\begin{equation}\label{eq:estimate-integral-of-au2+bu2}
 \int_\Omega a(x)u^2\, dx+\int_{\partial\Omega} b(x)u^2\, d\sigma \leqslant \int_\Omega a(x)(u-\overline{u})^2\, dx
 +\int_{\partial\Omega} b(x)(u-\overline{u})^2\, d\sigma,
\end{equation}
for all $u\in \mathscr{W}_p$, where $\overline{u}=\frac{1}{|\Omega|}\int_{\Omega}u\, dx$. To see this, write $u=(u-\overline{u})+\overline{u}$ and note 
that $u^2\leqslant (u-\overline{u})^2+2u\overline{u}$, thus by integrating we find
\begin{multline}
\int_\Omega a(x)u^2\, dx+\int_{\partial\Omega} b(x)u^2\, d\sigma \\ 
\leqslant \int_\Omega a(x)(u-\overline{u})^2\, dx+\int_{\partial\Omega} b(x)(u-\overline{u})^2\, d\sigma \\
+2\overline{u}\Big(\int_\Omega a(x)u\, dx+\int_{\partial\Omega} b(x)u\, d\sigma\Big)
\end{multline}
which gives the estimate, since the last summand vanishes.

It follows from estimate \eqref{eq:estimate-integral-of-au2+bu2}, in combination with the estimates obtained in the proof of 
Lemma \ref{lemma:W1p-igual-Vptil+R}, that 
\begin{multline}
\int_\Omega a(x)u^2\, dx+\int_{\partial\Omega} b(x)u^2\, d\sigma \\
\leqslant \big(C_1\|a\|_{L^\frac{N}{2}(\Omega)}+C_2\|b\|_{L^{N-1}(\partial\Omega)}\big)\|u-\overline{u}\|_{W^{1,2}(\Omega)}^2\\
\leqslant \big(C_1\|a\|_{L^\frac{N}{2}(\Omega)}+C_2\|b\|_{L^{N-1}(\partial\Omega)}\big)(1+C^\text{P}_2)\int_{\Omega}|\nabla u|^2\, dx 
\end{multline}
for all $u\in \mathscr{W}_p$, where $C^\text{P}_2$ is the constant in Poincaré-Wirtinger inequality for $p=2$. Thus
$$
\frac{\frac{1}{p}\int_\Omega|\nabla u|^p\, dx +\frac{1}{2}\int_\Omega |\nabla u|^2\, dx}
{\frac{1}{2}\int_\Omega a(x)u^2 \, dx +\frac{1}{2}\int_{\partial\Omega} b(x)u^2 \, d\sigma}
> \frac{1}{\big(C_1\|a\|_{L^\frac{N}{2}(\Omega)}+C_2\|b\|_{L^{N-1}(\partial\Omega)}\big)(1+C^\text{P}_2)}
$$
for all $u\in \mathscr{W}_p\backslash \{0\}$. From this it follows immediately that $\lambda_1(p)>0$.
\end{proof}

\begin{remark}
The previous proof also gives the estimate
$$
\lambda_1(p)\geqslant \frac{1}{\big(C_1\|a\|_{L^\frac{N}{2}(\Omega)}+C_2\|b\|_{L^{N-1}(\partial\Omega)}\big)(1+C^\text{P}_2)}
$$
which gives a bound from bellow for $\lambda_1(p)$ in terms of $a$, $b$ and some Sobolev and trace embeddings constants that do not depend 
on $p$. This should not be surprising, since $p>2$.
\end{remark}

The following lemma shows, essentially, that the functional defined in Eq. \eqref{eq:functional-p2_laplacian} is coercive for $p>2$, when restricted 
to the subspace $\mathscr{W}_p$. 

\begin{lemma}\label{lemma:limit-coercive-Vptil}
Let $p>2$. For every $\lambda>0$ we have
\begin{equation}\label{eq:limit-coercive-Vptil}
\lim_{\|u\|_{W^{1,p}(\Omega)}\to \infty,\, u\in \mathscr{W}_p}\Big(\frac{1}{p}\int_\Omega|\nabla u|^p\, dx  
-\frac{\lambda}{2}\int_\Omega a(x)u^2 \, dx -\frac{\lambda}{2} \int_{\partial\Omega} b(x)u^2 \, d\sigma\Big) =\infty .
\end{equation}
\end{lemma}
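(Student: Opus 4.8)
The plan is to reduce the asserted limit to a one--variable growth estimate in the single quantity $s:=\|\nabla u\|_{L^p(\Omega)}$.

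First I would establish a Poincar\'e--type inequality on the subspace $\mathscr{W}_p$: there is a constant $C_\ast>0$ such that $\|u\|_{W^{1,p}(\Omega)}\leqslant C_\ast\|\nabla u\|_{L^p(\Omega)}$ for all $u\in\mathscr{W}_p$. Granting this, any sequence $(u_n)\subset\mathscr{W}_p$ with $\|u_n\|_{W^{1,p}(\Omega)}\to\infty$ necessarily has $\|\nabla u_n\|_{L^p(\Omega)}\to\infty$, so it will suffice to let $s\to\infty$. The inequality itself can be proved by the usual compactness argument: if it failed, there would be $u_n\in\mathscr{W}_p$ with $\|u_n\|_{W^{1,p}(\Omega)}=1$ and $\|\nabla u_n\|_{L^p(\Omega)}\to 0$; extracting a subsequence, $u_n\rightharpoonup u$ in $W^{1,p}(\Omega)$ and $u_n\to u$ in $L^p(\Omega)$ with $\nabla u=0$, so $u$ is a non-zero constant; but $\mathscr{W}_p$ is closed (Lemma \ref{lemma:W1p-igual-Vptil+R}) and, by \eqref{eq:int-a+int-b_maior_zero}, contains no non-zero constant, a contradiction. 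Alternatively, writing $u=(u-\overline{u})+\overline{u}$ and solving the defining constraint of $\mathscr{W}_p$ for $\overline{u}$, one bounds $|\overline{u}|$ by a multiple of $\|\nabla u\|_{L^2(\Omega)}$ using exactly the Sobolev/trace estimates from the proof of Lemma \ref{lemma:W1p-igual-Vptil+R} together with Poincar\'e--Wirtinger, and then passes from $L^2$ to $L^p$ by H\"older on the bounded domain $\Omega$.

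Next I would feed in the lower-order bound already contained in the proof of Lemma \ref{lemma:lambda1_positivo}, namely that for every $u\in\mathscr{W}_p$
\[
\int_\Omega a(x)u^2\, dx+\int_{\partial\Omega} b(x)u^2\, d\sigma\leqslant C_\sharp\int_\Omega|\nabla u|^2\, dx,
\]
with $C_\sharp=\big(C_1\|a\|_{L^{N/2}(\Omega)}+C_2\|b\|_{L^{N-1}(\partial\Omega)}\big)(1+C^\text{P}_2)$, and then H\"older's inequality, which since $p>2$ and $|\Omega|<\infty$ gives $\int_\Omega|\nabla u|^2\, dx\leqslant|\Omega|^{1-2/p}\,s^2$. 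Combining, for all $u\in\mathscr{W}_p$,
\[
\frac{1}{p}\int_\Omega|\nabla u|^p\, dx-\frac{\lambda}{2}\int_\Omega a(x)u^2\, dx-\frac{\lambda}{2}\int_{\partial\Omega} b(x)u^2\, d\sigma\geqslant \frac{1}{p}\,s^{p}-\frac{\lambda\, C_\sharp\,|\Omega|^{1-2/p}}{2}\,s^{2}.
\]
Since $p>2$, the right-hand side, regarded as a function of $s\geqslant 0$, tends to $+\infty$ as $s\to\infty$; together with the first step this yields \eqref{eq:limit-coercive-Vptil}.

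I expect the only step needing real care to be the Poincar\'e inequality on $\mathscr{W}_p$ --- that is, ruling out the possibility that $\|u\|_{W^{1,p}(\Omega)}$ diverges while $\|\nabla u\|_{L^p(\Omega)}$ stays bounded; the remainder is a routine H\"older estimate plugged into the elementary fact that $\tfrac1p s^p-c\,s^2\to\infty$ for $p>2$.
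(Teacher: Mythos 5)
Your proof is correct and follows essentially the same route as the paper: the quadratic weight terms are bounded by a constant times $\|\nabla u\|_{L^p(\Omega)}^2$ using the Sobolev/trace estimates from the proof of Lemma \ref{lemma:lambda1_positivo} together with H\"older's inequality, and coercivity then reduces to the elementary fact that $\tfrac{1}{p}s^p-cs^2\to\infty$ for $p>2$. The only organizational difference is that you package the passage from $\|u\|_{W^{1,p}(\Omega)}\to\infty$ to $\|\nabla u\|_{L^p(\Omega)}\to\infty$ as a quantitative Poincar\'e inequality on $\mathscr{W}_p$, proved by the same compactness contradiction the paper uses for Lemma \ref{lemma:u-u_0-converges-infty}, whereas the paper instead subtracts the mean and reduces to the mean-zero subspace $\mathscr{V}_p$ via estimate \eqref{eq:estimate-integral-of-au2+bu2}; both devices are equally valid.
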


First we need a technical tool.
\begin{lemma}\label{lemma:u-u_0-converges-infty}
For each $u\in W^{1,p}(\Omega)$, define $u^{(0)}=u-\overline{u}$, where $\overline{u}=\frac{1}{|\Omega|}\int_{\Omega}u\, dx$. Then, on the 
subspace $\mathscr{W}_p$, $\|u^{(0)}\|_{W^{1,p}(\Omega)}\to \infty$ as $\|u\|_{W^{1,p}(\Omega)}\to \infty$.
\end{lemma}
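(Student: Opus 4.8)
The plan is to establish the contrapositive in quantitative form: I will show that there is a constant $C$, depending only on $\Omega$, $a$, $b$ and $p$ but not on $u$, such that
\[
\|u\|_{W^{1,p}(\Omega)}\le C\,\|u^{(0)}\|_{W^{1,p}(\Omega)}\qquad\text{for all }u\in\mathscr{W}_p,
\]
from which the assertion is immediate. The starting point is the splitting $u=u^{(0)}+\overline{u}$, for which $\nabla u=\nabla u^{(0)}$, so that by the triangle inequality in $W^{1,p}(\Omega)$ and the fact that the constant function $\overline{u}$ has $W^{1,p}$-norm equal to $|\overline{u}|\,|\Omega|^{1/p}$,
\[
\|u\|_{W^{1,p}(\Omega)}\le\|u^{(0)}\|_{W^{1,p}(\Omega)}+|\overline{u}|\,|\Omega|^{1/p}.
\]
Thus the whole matter reduces to bounding $|\overline{u}|$ by $\|u^{(0)}\|_{W^{1,p}(\Omega)}$.

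To do that I would exploit membership in $\mathscr{W}_p$. Substituting $u=u^{(0)}+\overline{u}$ into the defining identity $\int_\Omega a(x)u\,dx+\int_{\partial\Omega}b(x)u\,d\sigma=0$ gives
\[
\overline{u}\Big(\int_\Omega a(x)\,dx+\int_{\partial\Omega}b(x)\,d\sigma\Big)=-\Big(\int_\Omega a(x)u^{(0)}\,dx+\int_{\partial\Omega}b(x)u^{(0)}\,d\sigma\Big).
\]
By hypothesis \eqref{eq:int-a+int-b_maior_zero} the quantity $A:=\int_\Omega a(x)\,dx+\int_{\partial\Omega}b(x)\,d\sigma$ is strictly positive, so $|\overline{u}|=A^{-1}\big|\int_\Omega a(x)u^{(0)}\,dx+\int_{\partial\Omega}b(x)u^{(0)}\,d\sigma\big|$. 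Applying the estimate obtained in the proof of Lemma \ref{lemma:W1p-igual-Vptil+R} to $v=u^{(0)}$ (which lies in $W^{1,p}(\Omega)$ since $u$ does and constants do) yields $|\overline{u}|\le A^{-1}\widetilde{C}\big(C_1\|a\|_{L^{N/2}(\Omega)}+C_2\|b\|_{L^{N-1}(\partial\Omega)}\big)^{1/2}\|u^{(0)}\|_{W^{1,2}(\Omega)}$. Since $p>2$ and $\Omega$ is bounded we have $W^{1,p}(\Omega)\hookrightarrow W^{1,2}(\Omega)$, hence $\|u^{(0)}\|_{W^{1,2}(\Omega)}\le C_3\|u^{(0)}\|_{W^{1,p}(\Omega)}$. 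Combining these bounds with the displayed triangle inequality gives the claimed estimate, and therefore $\|u^{(0)}\|_{W^{1,p}(\Omega)}\to\infty$ whenever $\|u\|_{W^{1,p}(\Omega)}\to\infty$ along $\mathscr{W}_p$.

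I do not expect a real obstacle here; the only points requiring a little care are that the boundedness estimate for the functional $v\mapsto\int_\Omega a(x)v\,dx+\int_{\partial\Omega}b(x)v\,d\sigma$ be invoked on $u^{(0)}$ rather than on $u$, and that every constant in the chain of inequalities be manifestly independent of the particular $u\in\mathscr{W}_p$, so that the argument genuinely forces divergence of $\|u^{(0)}\|_{W^{1,p}(\Omega)}$.
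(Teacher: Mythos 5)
Your proof is correct, but it follows a genuinely different route from the paper's. The paper argues by contradiction and compactness: it takes a sequence $(u_n)\subset\mathscr{W}_p$ with $\|u_n\|_{W^{1,p}(\Omega)}\to\infty$ and $\|u_n^{(0)}\|_{W^{1,p}(\Omega)}$ bounded, normalizes $v_n=u_n/\|u_n\|_{L^p(\Omega)}$, extracts a weak limit $v_0$ by Rellich, shows $\nabla v_0=0$ via weak lower semicontinuity, and concludes $v_0=0$ from Lemma \ref{lemma:W1p-igual-Vptil+R}, contradicting $\|v_n\|_{L^p(\Omega)}=1$. You instead observe that $\mathscr{W}_p$ is the kernel of the bounded functional $\varphi$ and solve the constraint for the mean: $\overline{u}=-A^{-1}\varphi(u^{(0)})$ with $A=\int_\Omega a\,dx+\int_{\partial\Omega}b\,d\sigma>0$ by \eqref{eq:int-a+int-b_maior_zero}, whence the continuity estimate for $\varphi$ from the proof of Lemma \ref{lemma:W1p-igual-Vptil+R} (applied to $u^{(0)}\in W^{1,p}(\Omega)\hookrightarrow W^{1,2}(\Omega)$, using $p>2$ and $\Omega$ bounded) gives $|\overline{u}|\leqslant C'\|u^{(0)}\|_{W^{1,p}(\Omega)}$ and hence the quantitative bound $\|u\|_{W^{1,p}(\Omega)}\leqslant C\|u^{(0)}\|_{W^{1,p}(\Omega)}$ on $\mathscr{W}_p$. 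Your argument is more elementary (no compactness or weak convergence needed), yields an explicit constant rather than a mere qualitative divergence statement, and in fact both proofs hinge on the same structural fact — that nonzero constants lie outside $\mathscr{W}_p$ — which enters the paper's proof only at the final step to kill the limit $v_0$, whereas you exploit it directly and quantitatively. The only points to keep explicit, which you already flag, are that $\varphi$ is evaluated on $u^{(0)}$ (not $u$) and that all constants are independent of $u$; both are satisfied in your chain of inequalities.
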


\begin{proof}
If the conclusion is false then there is a sequence $(u_n)\subset \mathscr{W}_p$ such that $\|u_n\|_{W^{1,p}(\Omega)}\to \infty$ for which
$\|u_n^{(0)}\|_{W^{1,p}(\Omega)}\leqslant C$ for some constant $C\geqslant 0$. Since 
$\int_\Omega |\nabla u_n|^p\, dx =\int_\Omega |\nabla u_n^{(0)}|^p\, dx\leqslant C^p$ we must have $\|u_n\|_{L^{p}(\Omega)}\to \infty$. Set 
$v_n:=u_n/\|u_n\|_{L^{p}(\Omega)}$. Then there exists 
$v_0\subset \mathscr{W}_p$ such that $v_n\rightharpoonup v_0$ in $W^{1,p}(\Omega)$ and $v_n\to v_0$ in $L^p(\Omega)$. But then
$$
\int_\Omega |\nabla v_0|^p\, dx\leqslant \liminf_{n\to \infty} \int_\Omega |\nabla v_n|^p\, dx=
\liminf_{n\to \infty} \frac{1}{\|u_n\|_{L^{p}(\Omega)}^p}\int_\Omega |\nabla u_n|^p\, dx=0,
$$
thus $v_0$ is constant. By Lemma \ref{lemma:W1p-igual-Vptil+R} this constant is zero, which contradicts the fact that $\|v_n\|_{L^p(\Omega)}=1$.
\end{proof}

\begin{proof}[Proof of Lemma \ref{lemma:limit-coercive-Vptil}]
For simplicity, let us introduce the notation $u^{(0)}=u-\overline{u}$, where $\overline{u}=\frac{1}{|\Omega|}\int_{\Omega}u\, dx$, so that 
estimate \eqref{eq:estimate-integral-of-au2+bu2} takes the form
\begin{equation}
 \int_\Omega a(x)u^2\, dx+\int_{\partial\Omega} b(x)u^2\, d\sigma 
 \leqslant \int_\Omega a(x)(u^{(0)})^2\, dx +\int_{\partial\Omega} b(x)(u^{(0)})^2\, d\sigma
 \ \ \ (u\in \mathscr{W}_p).
\end{equation}
Moreover, $\nabla u=\nabla u^{(0)}$ and $\|u^{(0)}\|_{W^{1,p}(\Omega)}\to \infty$ as $\|u\|_{W^{1,p}(\Omega)}\to \infty$ by 
Lemma \ref{lemma:u-u_0-converges-infty}. Therefore, it suffices to prove \eqref{eq:limit-coercive-Vptil} when $u\in \mathscr{V}_p$, that is
\begin{equation}\label{eq:limit-coercive-Vp}
\lim_{\|u\|_{W^{1,p}(\Omega)}\to \infty,\, u\in \mathscr{V}_p}\Big(\frac{1}{p}\int_\Omega|\nabla u|^p\, dx  
-\frac{\lambda}{2}\int_\Omega a(x)u^2 \, dx -\frac{\lambda}{2} \int_{\partial\Omega} b(x)u^2 \, d\sigma\Big) =\infty .
\end{equation}

Now, we have $\int_\Omega|\nabla u|^p\, dx\geqslant \frac{1}{1+C^\text{P}_p}\|u\|^p_{W^{1,p}(\Omega)}$ by Poincaré-Wirtinger 
inequality and the terms $\int_\Omega a(x)u^2 \, dx$ and $\int_{\partial\Omega} b(x)u^2 \, d\sigma$ can be both estimated (up to a multiplicative 
constant) by $\int_{\Omega}|\nabla u|^2\, dx$ which, in turn, can be estimated (up to a multiplicative constant) 
by $\|u\|^2_{W^{1,p}(\Omega)}$. Thus, up to a multiplicative constant, the expression in \eqref{eq:limit-coercive-Vptil} can be estimated by 
$\|u\|^p_{W^{1,p}(\Omega)} -\|u\|^2_{W^{1,p}(\Omega)}$. Since $p>2$, the conclusion follows.
\end{proof}

\begin{proposition}\label{prop:eigenvalues_bigger_than_lambda1}
Let $p>2$. Every number $\lambda\in (\lambda_1(p),\infty)$ is an eigenvalue of Problem \eqref{eq:eigenvalue_problem-p2_laplacian}.
\end{proposition}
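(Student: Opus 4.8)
The plan is to obtain $\lambda$ as an eigenvalue by minimizing the energy functional $\mathscr{I}_\lambda$ of Eq.~\eqref{eq:functional-p2_laplacian} over the closed subspace $\mathscr{W}_p$, and then to convert the constrained critical point into an unconstrained one via the splitting of Lemma~\ref{lemma:W1p-igual-Vptil+R}, exactly as motivated in Remark~\ref{rmk:eigenfunction_lies_on_Vptil}. Fix $\lambda>\lambda_1(p)$. First I record that $\mathscr{I}_\lambda$ is coercive on $\mathscr{W}_p$: since $\tfrac12\int_\Omega|\nabla u|^2\,dx\geqslant 0$, Lemma~\ref{lemma:limit-coercive-Vptil} yields $\mathscr{I}_\lambda(u)\to\infty$ as $\|u\|_{W^{1,p}(\Omega)}\to\infty$ with $u\in\mathscr{W}_p$. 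In particular $m:=\inf_{\mathscr{W}_p}\mathscr{I}_\lambda>-\infty$ and every minimizing sequence $(u_n)\subset\mathscr{W}_p$ is bounded in $W^{1,p}(\Omega)$.

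Next I extract a minimizer by the direct method. Passing to a subsequence, $u_n\rightharpoonup u_\lambda$ in $W^{1,p}(\Omega)$, and $u_\lambda\in\mathscr{W}_p$ because $\mathscr{W}_p$ is a closed subspace, hence weakly closed. The gradient terms $u\mapsto\tfrac1p\int_\Omega|\nabla u|^p\,dx$ and $u\mapsto\tfrac12\int_\Omega|\nabla u|^2\,dx$ are convex and strongly continuous, hence weakly lower semicontinuous. For the zero-order terms I claim $\int_\Omega a(x)u_n^2\,dx\to\int_\Omega a(x)u_\lambda^2\,dx$ and $\int_{\partial\Omega}b(x)u_n^2\,d\sigma\to\int_{\partial\Omega}b(x)u_\lambda^2\,d\sigma$: writing $u_n^2-u_\lambda^2=(u_n-u_\lambda)(u_n+u_\lambda)$ and applying Hölder with the embeddings $W^{1,2}(\Omega)\hookrightarrow L^{2N/(N-2)}(\Omega)$ and $W^{1,2}(\Omega)\hookrightarrow L^{2(N-1)/(N-2)}(\partial\Omega)$, one controls the differences by $\|a\|_{L^{N/2}(\Omega)}$, resp. $\|b\|_{L^{N-1}(\partial\Omega)}$, times a bounded factor; since the embeddings $W^{1,p}(\Omega)\hookrightarrow L^2(\Omega)$ and $W^{1,p}(\Omega)\hookrightarrow L^2(\partial\Omega)$ are compact for $p>2$, a routine $\varepsilon$-splitting $a=a_1+a_2$ with $a_1\in L^\infty(\Omega)$, $\|a_2\|_{L^{N/2}(\Omega)}<\varepsilon$ (and likewise for $b$) makes this rigorous. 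Hence $\mathscr{I}_\lambda(u_\lambda)\leqslant\liminf_n\mathscr{I}_\lambda(u_n)=m$, so $u_\lambda$ attains the infimum.

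It remains to see $u_\lambda\neq 0$, and this is where $\lambda>\lambda_1(p)$ is used: by the reformulation~\eqref{eq:first_nonzero_eigenvalue-Vtil} there is $w\in\mathscr{W}_p\setminus\{0\}$ with $\tfrac12\int_\Omega a w^2\,dx+\tfrac12\int_{\partial\Omega}b w^2\,d\sigma>0$ whose Rayleigh quotient lies below $\lambda$, which says precisely that $\mathscr{I}_\lambda(w)<0$. Therefore $m\leqslant\mathscr{I}_\lambda(w)<0=\mathscr{I}_\lambda(0)$, forcing $u_\lambda\neq 0$. Finally I upgrade criticality: since $u_\lambda$ minimizes the $C^1$ functional $\mathscr{I}_\lambda$ over the subspace $\mathscr{W}_p$, we have $\mathscr{I}_\lambda'(u_\lambda)\psi=0$ for all $\psi\in\mathscr{W}_p$; for arbitrary $\phi\in W^{1,p}(\Omega)$ write $\phi=\psi+c$ with $\psi\in\mathscr{W}_p$, $c\in\mathbb{R}$, by Lemma~\ref{lemma:W1p-igual-Vptil+R}, and note that testing the constant $c$ gives $\mathscr{I}_\lambda'(u_\lambda)c=-\lambda c\bigl(\int_\Omega a u_\lambda\,dx+\int_{\partial\Omega}b u_\lambda\,d\sigma\bigr)=0$ since $u_\lambda\in\mathscr{W}_p$. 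Hence $\mathscr{I}_\lambda'(u_\lambda)\phi=0$ for all $\phi\in W^{1,p}(\Omega)$, so $u_\lambda$ satisfies Eq.~\eqref{eq:eigenvalue-eigenfunction-p2_laplacian} and $\lambda$ is an eigenvalue of Problem~\eqref{eq:eigenvalue_problem-p2_laplacian}. The only mildly delicate step I anticipate is the weak continuity of the boundary term $\int_{\partial\Omega}b u^2\,d\sigma$ under the stated integrability of $b$, via the compact trace embedding; the rest is the standard variational machinery.
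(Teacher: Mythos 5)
Your proof is correct and follows essentially the same route as the paper: minimize the coercive, weakly lower semicontinuous functional $\mathscr{I}_\lambda$ on $\mathscr{W}_p$, use $\lambda>\lambda_1(p)$ to produce a test function with negative energy so the minimizer is nontrivial, and pass from criticality on $\mathscr{W}_p$ to criticality on all of $W^{1,p}(\Omega)$ via the decomposition of Lemma~\ref{lemma:W1p-igual-Vptil+R}. The only difference is that you spell out the weak continuity of the zero-order terms (via the $\varepsilon$-splitting and compactness), which the paper leaves as an elementary verification.
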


\begin{proof}
Fix $\lambda\in (\lambda_1(p),\infty)$ and define $\mathscr{I}_\lambda:\mathscr{W}_p\to \mathbb{R}$ by \eqref{eq:functional-p2_laplacian}, that is
\begin{equation*}
\mathscr{I}_\lambda(u)=\frac{1}{p}\int_\Omega|\nabla u|^p\, dx +\frac{1}{2}\int_\Omega |\nabla u|^2\, dx 
-\frac{\lambda}{2}\int_\Omega a(x)u^2 \, dx -\frac{\lambda}{2} \int_{\partial\Omega} b(x)u^2 \, d\sigma \ \ \ (u\in \mathscr{W}_p). 
\end{equation*}
It is standard to show that $\mathscr{I}_\lambda\in C^1(\mathscr{W}_p;\mathbb{R})$ and that its derivative is given by
\begin{equation*}
\langle \mathscr{I}_\lambda'(u),\phi \rangle =\int_\Omega |\nabla u|^{p-2}\nabla u\cdot \nabla \phi \, dx +\int_\Omega \nabla u\cdot \nabla \phi \, dx 
-\lambda \int_\Omega a(x)u\phi \, dx -\lambda \int_{\partial\Omega} b(x)u \phi \, d\sigma .
\end{equation*}
It is also elementary to check that $\mathscr{I}_\lambda$ is weakly lower semicontinuous on $\mathscr{W}_p$. Moreover, Lemma \ref{lemma:limit-coercive-Vptil} implies 
that $\mathscr{I}_\lambda$ is coercive, meaning that
$$
\lim_{\|u\|_{W^{1,p}(\Omega)}\to \infty,\, u\in \mathscr{W}_p}\mathscr{I}_\lambda(u)=\infty .
$$
Standard results in the calculus of variations (cf. \cite[Theorem 1.2]{struwe1990variational}) assure the existence of a global minimum point 
$u_\lambda\in \mathscr{W}_p$ for $\mathscr{I}_\lambda$. Since $\lambda>\lambda_1(p)$, it follows from the very definition of $\lambda_1(p)$ that there is some 
$v_{\lambda}$ satisfying $\mathscr{I}_\lambda(v_\lambda)<0$. Thus $\mathscr{I}_\lambda(u_\lambda)\leqslant \mathscr{I}_\lambda(v_\lambda)<0$ and we 
can infer that $u_\lambda\neq 0$. Moreover, the obvious identity
$$\langle \mathscr{I}_\lambda'(u_\lambda),\phi \rangle =0\ \ \ (\phi\in \mathscr{W}_p)$$
is also satisfied when $\phi$ is a constant function. It follows from Lemma \ref{lemma:W1p-igual-Vptil+R} that this identity is then satisfied for 
every $\phi\in W^{1,p}(\Omega)$. Therefore $\lambda$ is an eigenvalue according to Definition \ref{def:eigenvalue-eigenfunction-p2_laplacian}. 
 \end{proof}

\begin{proposition}\label{prop:no-eigenvalue_in_interval}
Let $p>2$. No number $\lambda\in (0,\lambda_1(p))$ is an eigenvalue of Problem \eqref{eq:eigenvalue_problem-p2_laplacian}.
\end{proposition}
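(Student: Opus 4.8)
The plan is to argue by contradiction, exactly mirroring the computation behind Lemma \ref{lemma:eigenvalue_is_nonegative} but keeping track of the numerical factors. Suppose $\lambda\in(0,\lambda_1(p))$ is an eigenvalue with eigenfunction $u_\lambda\in W^{1,p}(\Omega)\backslash\{0\}$. First I would invoke Remark \ref{rmk:eigenfunction_lies_on_Vptil} to conclude $u_\lambda\in\mathscr{W}_p$, and then test Eq. \eqref{eq:eigenvalue-eigenfunction-p2_laplacian} against $\phi=u_\lambda$ to obtain the energy identity
\[
\int_\Omega|\nabla u_\lambda|^p\,dx+\int_\Omega|\nabla u_\lambda|^2\,dx=\lambda\Big(\int_\Omega a(x)u_\lambda^2\,dx+\int_{\partial\Omega}b(x)u_\lambda^2\,d\sigma\Big).
\]
Next I would note that the right-hand side is strictly positive: otherwise the left-hand side vanishes, $u_\lambda$ is constant, and since constants lie outside $\mathscr{W}_p$ by condition \eqref{eq:int-a+int-b_maior_zero}, we would get $u_\lambda=0$, a contradiction. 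In particular both the quantity $\int_\Omega a u_\lambda^2\,dx+\int_{\partial\Omega}b u_\lambda^2\,d\sigma$ and the quantity $A+B$ are positive, where I abbreviate $A=\int_\Omega|\nabla u_\lambda|^p\,dx$ and $B=\int_\Omega|\nabla u_\lambda|^2\,dx$.

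Since $u_\lambda\in\mathscr{W}_p\backslash\{0\}$, the reformulation \eqref{eq:first_nonzero_eigenvalue-Vtil} of $\lambda_1(p)$ gives
\[
\lambda_1(p)\leqslant\frac{\frac{1}{p}A+\frac{1}{2}B}{\frac{1}{2}\int_\Omega a(x)u_\lambda^2\,dx+\frac{1}{2}\int_{\partial\Omega}b(x)u_\lambda^2\,d\sigma}.
\]
Substituting the energy identity into the denominator turns the right-hand side into $\lambda\cdot\dfrac{\frac{1}{p}A+\frac{1}{2}B}{\frac{1}{2}A+\frac{1}{2}B}$. The decisive point — and essentially the only place where the hypothesis $p>2$ is used — is that $\frac1p<\frac12$, so $\frac1pA+\frac12B\leqslant\frac12A+\frac12B$, and since $A+B>0$ the displayed fraction is at most $1$. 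Hence $\lambda_1(p)\leqslant\lambda$, contradicting $\lambda<\lambda_1(p)$, which proves that no $\lambda\in(0,\lambda_1(p))$ is an eigenvalue.

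I do not expect any genuine obstacle here: the argument is a one-line test-function computation combined with the variational characterization of $\lambda_1(p)$. The only thing worth flagging is that the inequality $\bigl(\frac1pA+\frac12B\bigr)/\bigl(\frac12A+\frac12B\bigr)\leqslant1$ is exactly where $p>2$ (rather than $p=2$ or $p<2$) enters, which is consistent with the fact noted in the introduction that Theorem \ref{thm:main} fails at $p=2$. Combining this proposition with Lemma \ref{lemma:eigenvalue_is_nonegative}, Lemma \ref{lemma:lambda1_positivo}, and Proposition \ref{prop:eigenvalues_bigger_than_lambda1} then completes the proof of Theorem \ref{thm:main}\ref{item:p_bigger_2}.
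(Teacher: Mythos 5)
Your argument is correct and is essentially the paper's own proof in a slightly rearranged form: both rest on testing Eq.~\eqref{eq:eigenvalue-eigenfunction-p2_laplacian} with $\phi=u_\lambda$, the variational characterization \eqref{eq:first_nonzero_eigenvalue-Vtil} of $\lambda_1(p)$, and the inequality $\tfrac1p<\tfrac12$ for $p>2$ (the paper phrases it as $0<\tfrac{\lambda_1(p)-\lambda}{2}\bigl(\int_\Omega a u_\lambda^2\,dx+\int_{\partial\Omega}b u_\lambda^2\,d\sigma\bigr)\leqslant \mathscr{I}_\lambda(u_\lambda)\leqslant 0$ rather than as your bound $\lambda_1(p)\leqslant\lambda$). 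Your explicit justification that $\int_\Omega a u_\lambda^2\,dx+\int_{\partial\Omega}b u_\lambda^2\,d\sigma>0$ is a point the paper leaves implicit, so no changes are needed.
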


\begin{proof}
First, note that 
\begin{multline}
\frac{\lambda_1(p)-\lambda}{2}\Big(\int_\Omega a(x)u^2 \, dx + \int_{\partial\Omega} b(x)u^2 \, d\sigma\Big)\\
\leqslant \frac{1}{p}\int_\Omega|\nabla u|^p\, dx +\frac{1}{2}\int_\Omega |\nabla u|^2\, dx 
-\frac{\lambda}{2}\int_\Omega a(x)u^2 \, dx -\frac{\lambda}{2} \int_{\partial\Omega} b(x)u^2 \, d\sigma ,
\end{multline}
for every $u\in \mathscr{W}_p\backslash\{0\}$ and $\lambda\in \mathbb{R}$. If there was an eigenvalue $\lambda\in (0,\lambda_1(p))$ with corresponding eigenfunction 
$u_\lambda\in \mathscr{W}_p\backslash\{0\}$ then the above estimate would imply
\begin{multline}
0<\frac{\lambda_1(p)-\lambda}{2}\Big(\int_\Omega a(x)u_\lambda^2 \, dx + \int_{\partial\Omega} b(x)u_\lambda^2 \, d\sigma\Big)\\
\leqslant \frac{1}{2}\int_\Omega|\nabla u_\lambda|^p\, dx +\frac{1}{2}\int_\Omega |\nabla u_\lambda|^2\, dx 
-\frac{\lambda}{2}\int_\Omega a(x)u_\lambda^2 \, dx -\frac{\lambda}{2} \int_{\partial\Omega} b(x)u_\lambda^2 \, d\sigma=0, 
\end{multline}
where the last identity follows by testing Eq. \eqref{eq:eigenvalue-eigenfunction-p2_laplacian} against $\phi=u_\lambda$. This is obviously a 
contradiction.
\end{proof}

\begin{lemma}\label{lemma:lambda_igual_nu}
Let $p>2$. Define
\begin{equation}\label{eq:first_eigenvalue-nu}
\nu_1(p):=\inf_{u\in \mathscr{W}_p\backslash \{0\}}
\frac{\int_\Omega |\nabla u|^2\, dx}{\int_\Omega a(x)u^2 \, dx + \int_{\partial\Omega} b(x)u^2\,d\sigma}. 
\end{equation}
Then $\lambda_1(p)=\nu_1(p)$.
\end{lemma}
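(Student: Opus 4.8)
The plan is to exploit the different homogeneities of the three terms in the Rayleigh-type quotient defining $\lambda_1(p)$: the term $\frac1p\int_\Omega|\nabla u|^p\,dx$ is homogeneous of degree $p$ under the scaling $u\mapsto tu$ ($t>0$), whereas both $\frac12\int_\Omega|\nabla u|^2\,dx$ and the denominator $\frac12\int_\Omega a u^2\,dx+\frac12\int_{\partial\Omega}bu^2\,d\sigma$ are homogeneous of degree $2$. Since $\mathscr{W}_p$ is a linear subspace (Lemma~\ref{lemma:W1p-igual-Vptil+R}), it is in particular invariant under this scaling, and that is enough to compare the two infima directly.

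First I would dispatch the easy inequality $\lambda_1(p)\geqslant\nu_1(p)$: for any $u\in\mathscr{W}_p\setminus\{0\}$ one simply discards the nonnegative term $\frac1p\int_\Omega|\nabla u|^p\,dx$ from the numerator, obtaining
$$
\frac{\frac1p\int_\Omega|\nabla u|^p\,dx+\frac12\int_\Omega|\nabla u|^2\,dx}{\frac12\int_\Omega a u^2\,dx+\frac12\int_{\partial\Omega}bu^2\,d\sigma}\ \geqslant\ \frac{\int_\Omega|\nabla u|^2\,dx}{\int_\Omega a u^2\,dx+\int_{\partial\Omega}bu^2\,d\sigma}\ \geqslant\ \nu_1(p),
$$
and then taking the infimum over $u$.

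For the reverse inequality I would fix $u\in\mathscr{W}_p\setminus\{0\}$ with $\int_\Omega a u^2\,dx+\int_{\partial\Omega}bu^2\,d\sigma>0$ (the remaining $u$'s contribute $+\infty$ to both infima, hence nothing) and test the quotient defining $\lambda_1(p)$ at the admissible competitor $tu\in\mathscr{W}_p\setminus\{0\}$. By the homogeneities this quotient equals
$$
\frac{t^{p-2}\,\frac1p\int_\Omega|\nabla u|^p\,dx+\frac12\int_\Omega|\nabla u|^2\,dx}{\frac12\int_\Omega a u^2\,dx+\frac12\int_{\partial\Omega}bu^2\,d\sigma}\ \geqslant\ \lambda_1(p)\qquad(t>0);
$$
since $p>2$, letting $t\to 0^+$ kills the first term in the numerator, so the limit of the left-hand side is exactly $\int_\Omega|\nabla u|^2\,dx\big/\big(\int_\Omega a u^2\,dx+\int_{\partial\Omega}bu^2\,d\sigma\big)$. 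Hence this ratio is $\geqslant\lambda_1(p)$, and taking the infimum over $u$ yields $\nu_1(p)\geqslant\lambda_1(p)$.

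There is no real obstacle here; the only points worth noting are that $\mathscr{W}_p$ must be a genuine linear subspace for $tu$ to remain a competitor (which is precisely Lemma~\ref{lemma:W1p-igual-Vptil+R}) and that the degenerate case of vanishing denominator is harmless. It is also worth observing that this is exactly the computation that breaks down for $1<p<2$, where $t^{p-2}\to\infty$ as $t\to 0^+$, which is why the $W^{1,2}$-based quantity $\mu_1(p)$ must be used in that regime instead.
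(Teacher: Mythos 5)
Your proof is correct and is essentially the paper's own argument: the inequality $\nu_1(p)\leqslant\lambda_1(p)$ by discarding the nonnegative $p$-term, and the reverse by testing the $\lambda_1(p)$-quotient at $tu$ and letting $t\to 0^+$ so that the $t^{p-2}$ factor kills the $p$-term. The only (inessential) inaccuracy is your closing aside: for $1<p<2$ the same scaling argument does not break down but simply requires $t\to\infty$ instead, which is exactly how the paper proves $\mu_1(p)=\nu_1$ in that regime.
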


\begin{proof}
The estimate $\nu_1(p)\leqslant \lambda_1(p)$ is obvious. On the other hand, for each $u\in \mathscr{W}_p\backslash \{0\}$ and $t>0$ we have
\begin{align*}
\lambda_1(p)&\leqslant \frac{\frac{1}{p}\int_\Omega|\nabla (tu)|^p\, dx +\frac{1}{2}\int_\Omega |\nabla (tu)|^2\, dx}
{\frac{1}{2}\int_\Omega a(x)(tu)^2 \, dx +\frac{1}{2}\int_{\partial\Omega} b(x)(tu)^2 \, d\sigma}\\
& =\frac{2t^{p-2}}{p}\frac{\int_\Omega|\nabla u|^p\, dx}{\int_\Omega a(x)u^2 \, dx +\int_{\partial\Omega} b(x)u^2 \, d\sigma}
+\frac{\int_\Omega |\nabla u|^2\, dx}{\int_\Omega a(x)u^2 \, dx +\int_{\partial\Omega} b(x)u^2 \, d\sigma}.
\end{align*}
By passing to the limit as $t\to 0$ we deduce that $\lambda_1(p)\leqslant \nu_1(p)$.
\end{proof}

\begin{proposition}\label{prop:lambda1-not_eigenvalue}
$\lambda_1(p)$ is not an eigenvalue of Problem \eqref{eq:eigenvalue_problem-p2_laplacian}.
\end{proposition}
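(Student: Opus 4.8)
The plan is to argue by contradiction, exploiting the fact, established in Lemma~\ref{lemma:lambda_igual_nu}, that $\lambda_1(p)$ coincides with the ``purely quadratic'' Rayleigh quotient $\nu_1(p)$ of Eq.~\eqref{eq:first_eigenvalue-nu}. Suppose $\lambda_1(p)$ were an eigenvalue. Since $\lambda_1(p)>0$ by Lemma~\ref{lemma:lambda1_positivo}, Remark~\ref{rmk:eigenfunction_lies_on_Vptil} guarantees that any corresponding eigenfunction $u$ lies in $\mathscr{W}_p\backslash\{0\}$. Testing Eq.~\eqref{eq:eigenvalue-eigenfunction-p2_laplacian} against $\phi=u$ yields
\begin{equation*}
\int_\Omega|\nabla u|^p\, dx+\int_\Omega|\nabla u|^2\, dx=\lambda_1(p)\Big(\int_\Omega a(x)u^2\, dx+\int_{\partial\Omega}b(x)u^2\, d\sigma\Big).
\end{equation*}
Writing $D$ for the bracket on the right-hand side, I would first observe that $D>0$: if $D=0$, the identity forces $\nabla u=0$ a.e., so $u$ is a nonzero constant function belonging to $\mathscr{W}_p$, which is impossible by Lemma~\ref{lemma:W1p-igual-Vptil+R} (in view of condition~\eqref{eq:int-a+int-b_maior_zero}).

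Next I would invoke $\lambda_1(p)=\nu_1(p)$ together with the very definition of $\nu_1(p)$ as an infimum: since $u\in\mathscr{W}_p\backslash\{0\}$ and $D>0$,
\begin{equation*}
\lambda_1(p)\,D=\nu_1(p)\,D\leqslant\int_\Omega|\nabla u|^2\, dx.
\end{equation*}
Substituting this bound into the identity above gives $\int_\Omega|\nabla u|^p\, dx+\int_\Omega|\nabla u|^2\, dx\leqslant\int_\Omega|\nabla u|^2\, dx$, hence $\int_\Omega|\nabla u|^p\, dx\leqslant0$. Therefore $\nabla u=0$ a.e., so $u$ is constant; but a constant function lying in $\mathscr{W}_p$ must be identically zero by Lemma~\ref{lemma:W1p-igual-Vptil+R}, contradicting $u\neq0$.

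There is essentially no serious obstacle here: the only points requiring care are the dichotomy on $D$ — ruling out $D=0$ before applying the infimum bound — and the fact that the eigenfunction genuinely belongs to $\mathscr{W}_p$, which is precisely what Remark~\ref{rmk:eigenfunction_lies_on_Vptil} supplies. Conceptually, the extra term $\frac{1}{p}\int_\Omega|\nabla u|^p\, dx$ in the Rayleigh quotient defining $\lambda_1(p)$ is strictly positive on nonconstant functions, so the infimum defining $\lambda_1(p)$ cannot be attained, and consequently $\lambda_1(p)$ fails to be an eigenvalue.
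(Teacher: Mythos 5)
Your proof is correct and follows essentially the same route as the paper's: test against $\phi=u$, use Lemma~\ref{lemma:lambda_igual_nu} to replace $\lambda_1(p)$ by $\nu_1(p)$ and bound $\nu_1(p)\,D$ by $\int_\Omega|\nabla u|^2\,dx$, conclude $\int_\Omega|\nabla u|^p\,dx=0$, and rule out nonzero constants in $\mathscr{W}_p$. Your explicit dichotomy on $D$ is a small extra precaution (the paper's inequality holds trivially when $D=0$), but the argument is the same.
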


\begin{proof}
Otherwise $\lambda=\lambda_1(p)$ would be an eigenvalue with corresponding eigenfunction $u_\lambda$. By Lemma \ref{lemma:lambda_igual_nu},
\begin{multline}
\int_\Omega|\nabla u_\lambda|^p\, dx +\nu_1(p)\Big(\int_\Omega a(x)u_\lambda^2 \, dx +\int_{\partial\Omega} b(x)u_\lambda^2 \, d\sigma\Big)\\
 \leqslant \int_\Omega|\nabla u_\lambda|^p\, dx +\int_\Omega|\nabla u_\lambda|^2\, dx \\
 = \lambda_1(p)\Big(\int_\Omega a(x)u_\lambda^2\, dx+\int_{\partial\Omega} b(x)u_\lambda^2\, d\sigma\Big),
\end{multline}
which implies $\int_\Omega|\nabla u_\lambda|^p\, dx=0$. By Poincar\'e-Wirtinger inequality $u_\lambda$ should be a constant. This, however, 
is impossible, due to Lemmas \ref{lemma:W1p-igual-Vptil+R} and \ref{lemma:lambda1_positivo} (see also Remark \ref{rmk:eigenfunction_lies_on_Vptil}).
\end{proof}

\begin{proof}[Proof of Theorem \ref{thm:main}\ref{item:p_bigger_2}]
Follows immediately from Lemma \ref{lemma:eigenvalue_is_nonegative} and Propositions \ref{prop:eigenvalues_bigger_than_lambda1}, 
\ref{prop:no-eigenvalue_in_interval} and \ref{prop:lambda1-not_eigenvalue}.
\end{proof}

\section{Proof of Theorem \ref{thm:main}\ref{item:p_less_2}}\label{sec:proof-assertion_b}

In the case $1<p<2$ we have continuous inclusions $W^{1,2}(\Omega)\subset W^{1,p}(\Omega)$, therefore it is natural to analyse Problem 
\eqref{eq:eigenvalue_problem-p2_laplacian} in the space $W^{1,2}(\Omega)$. Moreover we use the 
embeddings $W^{1,r}(\Omega)\hookrightarrow L^{\frac{rN}{N-r}}(\Omega)$ and $W^{1,r}(\Omega)\hookrightarrow L^{\frac{r(N-1)}{N-r}}(\partial\Omega)$ 
with $r=p$. Thus, if $a\in L^{\frac{pN}{(p-2)N+2p}}(\Omega)$ and $b\in L^{\frac{p(N-1)}{(p-2)N+p}}(\Omega)$ then integrals such as 
$\int_\Omega a(x)u^2\, dx$ and $\int_{\partial\Omega} b(x)u^2\, d\sigma$ will be well-defined and good estimates can be obtained. Clearly, these 
conditions are stronger than those in Section \ref{sec:proof-assertion_a} (and only make sense) for $\frac{2N}{N+1}<p<2$. The reader must bear in 
mind that this restriction on $p$ is not necessary under the more restrictive assumptions `$a\in L^\infty(\Omega)$' and 
`$b\in L^\infty(\partial\Omega)$'.

\begin{definition}\label{def:eigenvalue-eigenfunction-p2_laplacian-p_menor_2}
Let $1<p<2$. We call $\lambda\in \mathbb{R}$ an eigenvalue of Problem \eqref{eq:eigenvalue_problem-p2_laplacian} if there exists a non-zero 
$u\in W^{1,2}(\Omega)$ such that Eq. \eqref{eq:eigenvalue-eigenfunction-p2_laplacian} holds for all $\phi\in W^{1,2}(\Omega)$. Such a function 
$u\in W^{1,2}(\Omega)\backslash\{0\}$ will be called an eigenfunction corresponding to the eigenvalue $\lambda$. In other words, $\lambda\in \mathbb{R}$ is an 
eigenvalue of Problem \eqref{eq:eigenvalue_problem-p2_laplacian} with corresponding eigenfunction $u\in W^{1,2}(\Omega)\backslash\{0\}$ if and only 
if $u$ is a critical point of the $C^1$ functional defined in Eq. \eqref{eq:functional-p2_laplacian}. 
\end{definition}

The following result is an immediate consequence of what has been done in the previous section.

\begin{proposition}\label{prop:no_eigenvalues-p_less_2}
Let $\frac{2N}{N+1}<p<2$ and $\mu_1(p)$ be defined by \eqref{eq:first_nonzero_eigenvalue-W12}. Then no number in the 
set $(-\infty,0)\cup (0,\mu_1(p)]$ is an eigenvalue of Problem \eqref{eq:eigenvalue_problem-p2_laplacian}.
\end{proposition}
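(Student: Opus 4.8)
The plan is to transpose, almost word for word, the chain of results of Section \ref{sec:proof-assertion_a}, working in $W^{1,2}(\Omega)$ and with the hyperplane
\[
\mathscr{W}_2:=\Big\{u\in W^{1,2}(\Omega):\int_\Omega a(x)u\, dx+\int_{\partial\Omega} b(x)u\, d\sigma =0\Big\}
\]
playing the role of $\mathscr{W}_p$. The first point is that, for $\frac{2N}{N+1}<p<2$, the assumption $a\in L^{\frac{pN}{(p-2)N+2p}}(\Omega)$, $b\in L^{\frac{p(N-1)}{(p-2)N+p}}(\partial\Omega)$ is \emph{stronger} than the pair of conditions $a\in L^{N/2}(\Omega)$, $b\in L^{N-1}(\partial\Omega)$ used in Section \ref{sec:proof-assertion_a} (the exponents agree only at $p=2$). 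Hence the proofs of Lemma \ref{lemma:W1p-igual-Vptil+R} and Lemma \ref{lemma:lambda1_positivo} apply verbatim with $p$ replaced by $2$ — they only invoke the Sobolev and trace embeddings of $W^{1,2}(\Omega)$ — giving $W^{1,2}(\Omega)=\mathscr{W}_2\oplus\mathbb{R}$ and $\mu_1(p)>0$; moreover $\mu_1(p)$ equals the infimum in \eqref{eq:first_nonzero_eigenvalue-W12} restricted to $\mathscr{W}_2\setminus\{0\}$.

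Next I would dispose of the part $(-\infty,0)$: the proof of Lemma \ref{lemma:eigenvalue_is_nonegative}\ref{item:negative-not-eigenvalue} — test \eqref{eq:eigenvalue-eigenfunction-p2_laplacian} against the eigenfunction and note that the right-hand side cannot be negative — is valid for every $p>1$, so no $\lambda<0$ is an eigenvalue. Testing \eqref{eq:eigenvalue-eigenfunction-p2_laplacian} against constants (as in Remark \ref{rmk:eigenfunction_lies_on_Vptil}) shows that every eigenfunction belonging to a non-zero eigenvalue lies in $\mathscr{W}_2$, and no non-zero constant lies in $\mathscr{W}_2$ because of \eqref{eq:int-a+int-b_maior_zero}.

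The heart of the matter is the interval $(0,\mu_1(p)]$, and here I would first establish the $W^{1,2}$-analogue of Lemma \ref{lemma:lambda_igual_nu}: $\mu_1(p)=\nu_1(2)$, where
\[
\nu_1(2):=\inf_{u\in\mathscr{W}_2\setminus\{0\}}\frac{\int_\Omega|\nabla u|^2\, dx}{\int_\Omega a(x)u^2\, dx+\int_{\partial\Omega} b(x)u^2\, d\sigma}.
\]
The inequality $\mu_1(p)\geqslant\nu_1(2)$ is immediate from the definitions; for the reverse one replaces $u$ by $tu$ in \eqref{eq:first_nonzero_eigenvalue-W12} and lets $t\to\infty$, the relevant term being $\frac{2t^{p-2}}{p}\big(\int_\Omega a(x)u^2\, dx+\int_{\partial\Omega} b(x)u^2\, d\sigma\big)^{-1}\int_\Omega|\nabla u|^p\, dx$, which now tends to $0$ precisely because $p<2$ (this is the mirror of the $t\to0$ step for $p>2$ and is the only place the sign of $p-2$ matters). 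With this in hand, suppose $\lambda\in(0,\mu_1(p)]$ were an eigenvalue with eigenfunction $u_\lambda\in\mathscr{W}_2\setminus\{0\}$. Testing \eqref{eq:eigenvalue-eigenfunction-p2_laplacian} against $u_\lambda$ yields
\[
\int_\Omega|\nabla u_\lambda|^p\, dx+\int_\Omega|\nabla u_\lambda|^2\, dx=\lambda\Big(\int_\Omega a(x)u_\lambda^2\, dx+\int_{\partial\Omega} b(x)u_\lambda^2\, d\sigma\Big),
\]
and the bracket on the right must be strictly positive (else $u_\lambda$ would be constant). Bounding $\int_\Omega|\nabla u_\lambda|^2\, dx$ from below by $\nu_1(2)=\mu_1(p)\geqslant\lambda$ times that bracket and subtracting gives $\int_\Omega|\nabla u_\lambda|^p\, dx\leqslant(\lambda-\mu_1(p))\big(\int_\Omega a(x)u_\lambda^2\, dx+\int_{\partial\Omega} b(x)u_\lambda^2\, d\sigma\big)\leqslant0$, so $\nabla u_\lambda\equiv0$; by Poincaré--Wirtinger $u_\lambda$ is constant, contradicting $u_\lambda\in\mathscr{W}_2\setminus\{0\}$.

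I do not anticipate a real obstacle: the statement is advertised as an immediate consequence of Section \ref{sec:proof-assertion_a}, and the whole point is that the arguments transpose. The only genuine check is in the identity $\mu_1(p)=\nu_1(2)$, where one must verify that the rescaling of Lemma \ref{lemma:lambda_igual_nu} indeed reverses direction when $p<2$; and, at a more routine level, one should record that the case-(a) integrability is enough to keep $u\mapsto\int_\Omega a(x)u\, dx+\int_{\partial\Omega}b(x)u\, d\sigma$ bounded on $W^{1,2}(\Omega)$ (so $\mathscr{W}_2$ is closed) and to bound $\int_\Omega a(x)u^2\, dx$ and $\int_{\partial\Omega}b(x)u^2\, d\sigma$ by $\|u\|_{W^{1,2}(\Omega)}^2$, which is exactly what was done for $p=2$ in Section \ref{sec:proof-assertion_a}.
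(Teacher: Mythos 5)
Your proposal is correct and follows essentially the same route as the paper: transpose the Section \ref{sec:proof-assertion_a} lemmas to $W^{1,2}(\Omega)$ and $\mathscr{W}_2$, exclude $\lambda<0$ by testing against the eigenfunction, prove $\mu_1(p)=\nu_1$ via the rescaling $u\mapsto tu$ with $t\to\infty$ (the sign of $p-2$ reversing the limit direction), and then reach a contradiction from the tested identity. The only cosmetic difference is that you handle $(0,\mu_1(p))$ and $\{\mu_1(p)\}$ in a single inequality chain, whereas the paper adapts Propositions \ref{prop:no-eigenvalue_in_interval} and \ref{prop:lambda1-not_eigenvalue} separately.
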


Actually, as we have mentioned, the hipotheses on $a$ and $b$ here are stronger than those in Section \ref{sec:proof-assertion_a}, in the sense that 
`$a\in L^{\frac{pN}{(p-2)N+2p}}(\Omega)$' implies `$a\in L^{\frac{N}{2}}(\Omega)$' (as far as $1<p<2$), and similarly for $b$; thus, the proof of 
Lemma \ref{lemma:W1p-igual-Vptil+R} is still valid and gives us the decomposition $W^{1,2}(\Omega)=\mathscr{W}_2\oplus \mathbb{R}$; this is what 
we need in the sequel. Lemma \ref{lemma:eigenvalue_is_nonegative} (and its proof) also holds without any change. Besides, the same proof in 
Lemma \ref{lemma:lambda1_positivo} is valid and shows that $\mu_1(p)>0$. The proof of Proposition \ref{prop:no-eigenvalue_in_interval} does 
not work as it stands but can be easily adapted. In fact, if we define 
\begin{equation}\label{eq:nu1}
\nu_1:=\inf_{u\in \mathscr{W}_2\backslash \{0\}}\frac{\int_\Omega |\nabla u|^2\, dx}{\int_\Omega a(x)u^2 \, dx + \int_{\partial\Omega} b(x)u^2} 
\end{equation}
then the same proof in Lemma \ref{lemma:lambda_igual_nu} (except that we take $t\to \infty$ instead) shows that $\mu_1(p)=\nu_1$. Thus, if there was an 
eigenvalue $\lambda\in (0,\mu_1(p))$ with corresponding eigenfunction $u_\lambda\in \mathscr{W}_2\backslash\{0\}$ then we would have
\begin{multline}
0<\frac{\mu_1(p)-\lambda}{2}\Big(\int_\Omega a(x)u_\lambda^2 \, dx + \int_{\partial\Omega} b(x)u_\lambda^2 \, d\sigma\Big)\\
\leqslant \frac{1}{2}\int_\Omega |\nabla u_\lambda|^2\, dx 
-\frac{\lambda}{2}\int_\Omega a(x)u_\lambda^2 \, dx -\frac{\lambda}{2} \int_{\partial\Omega} b(x)u_\lambda^2 \, d\sigma \\
\leqslant \frac{1}{2}\int_\Omega|\nabla u_\lambda|^p\, dx +\frac{1}{2}\int_\Omega |\nabla u_\lambda|^2\, dx \\
-\frac{\lambda}{2}\int_\Omega a(x)u_\lambda^2 \, dx -\frac{\lambda}{2} \int_{\partial\Omega} b(x)u_\lambda^2 \, d\sigma=0,
\end{multline}
which is impossible. Finally, the same proof in Proposition \ref{prop:lambda1-not_eigenvalue} reveals that $\mu_1(p)$ is not an eigenvalue.

It is not clear, however, that the conclusion of Lemma \ref{lemma:limit-coercive-Vptil} holds for $1<p<2$, since the functional $\mathscr{I}_\lambda$ 
given in \eqref{eq:functional-p2_laplacian} is not coercive in this case. From now on we analyse the action of $\mathscr{I}_\lambda$ on the so called Nehari manifold defined, for each $\lambda>\mu_1(p)$, by
\begin{align}
\mathscr{N}_\lambda&:=\{u\in \mathscr{W}_2\backslash \{0\}:\langle \mathscr{I}'_\lambda(u),u\rangle=0\}\\
&=\Big\{u\in \mathscr{W}_2\backslash \{0\}:
\int_\Omega|\nabla u|^p\, dx+\int_\Omega|\nabla u|^2\, dx\\
&\hspace{5cm}=\lambda\int_\Omega a(x)u^2\, dx+\lambda\int_{\partial\Omega} b(x)u^2\, d\sigma\Big\}.
\end{align}
Note that on $\mathscr{N}_\lambda$ the functional $\mathscr{I}_\lambda$ is given by
\begin{equation}\label{eq:I_on_Nehari}
\begin{aligned}
\mathscr{I}_\lambda(u)&=\frac{1}{p}\int_\Omega|\nabla u|^p\, dx +\frac{1}{2}\int_\Omega |\nabla u|^2\, dx 
-\frac{\lambda}{2}\int_\Omega a(x)u^2 \, dx -\frac{\lambda}{2} \int_{\partial\Omega} b(x)u^2 \, d\sigma \\
&=\Big(\frac{1}{p}-\frac{1}{2}\Big)\int_\Omega|\nabla u|^p\, dx .
\end{aligned}
\end{equation}
In particular, $\mathscr{I}_\lambda$ is homogeneous of degree $p$ on $\mathscr{N}_\lambda$ in the sense that 
$\mathscr{I}_\lambda(tu)=t^p\mathscr{I}_\lambda(u)$ for all 
$u\in \mathscr{N}_\lambda$. However, $\mathscr{I}_\lambda$ is not necessarily coercive on $\mathscr{N}_\lambda$ which, otherwise, would facilitate some of our 
labor below. As is well known, the Nehari manifold is a natural constraint for $\mathscr{I}_\lambda$ and we work in the sequel to show that the minimum of 
$\mathscr{I}_\lambda$ restricted to $\mathscr{N}_\lambda$ turns out to be a free critical point, that is, a critical point of $\mathscr{I}_\lambda$ considered on the 
whole space.

In the rest of this paper recall that $\mu_1(p)$ equals $\nu_1$ (cf. Eq. \eqref{eq:nu1}). In what follows, and until further notice, $\lambda>\mu_1(p)$ is 
a fixed real number. First, we observe that the Nehari manifold $\mathscr{N}_\lambda$ is non-empty. In fact, from the definition of $\nu_1$,  
there exists $v_\lambda\in \mathscr{W}_2\backslash \{0\}$ such that 
$$
\int_\Omega |\nabla v_\lambda|^2\, dx <\lambda\int_\Omega a(x)v_\lambda^2 \, dx +\lambda \int_{\partial\Omega} b(x)v_\lambda^2 \, d\sigma ,
$$
thus $tv_\lambda\in \mathscr{N}_\lambda$ for some $t>0$; in fact this is equivalent to the identity
\begin{equation}\label{eq:solve_for_t-belong_to_Nehari}
t^p\int_\Omega|\nabla v_\lambda|^p\, dx+t^2\int_\Omega|\nabla v_\lambda|^2\, dx=
\lambda t^2\int_\Omega a(x)v_\lambda^2\, dx+\lambda t^2\int_{\partial\Omega} b(x)v_\lambda^2\, d\sigma
\end{equation}
which can be explicitly solved for $t$. 

\begin{lemma}\label{lemma:sequence_on_Nehari-bounded_in_W12}
Let $(u_n)\subset \mathscr{N}_\lambda$ be such that $\sup_{n\in \mathbb{N}}\int_\Omega|\nabla u_n|^p\, dx<\infty$. Then $(u_n)$ is bounded in 
$W^{1,2}(\Omega)$.
\end{lemma}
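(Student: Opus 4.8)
The plan is to argue by contradiction. Suppose $(u_n)$ is unbounded in $W^{1,2}(\Omega)$; passing to a subsequence, we may assume $t_n:=\|u_n\|_{W^{1,2}(\Omega)}\to\infty$. Set $v_n:=u_n/t_n$, so that $\|v_n\|_{W^{1,2}(\Omega)}=1$ and, since $\mathscr{W}_2$ is a linear subspace, $v_n\in\mathscr{W}_2$. Extract a weak limit $v_n\rightharpoonup v$ in $W^{1,2}(\Omega)$; as $\mathscr{W}_2$ is a closed subspace it is weakly closed, so $v\in\mathscr{W}_2$. The first ingredient I would establish is the weak continuity of the weighted quadratic terms: $\int_\Omega a v_n^2\,dx\to\int_\Omega a v^2\,dx$ and $\int_{\partial\Omega} b v_n^2\,d\sigma\to\int_{\partial\Omega} b v^2\,d\sigma$. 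This follows from Hölder's inequality together with the integrability hypotheses on $a$ and $b$ and the compactness of the embeddings $W^{1,2}(\Omega)\hookrightarrow L^{\frac{pN}{N-p}}(\Omega)$ and $W^{1,2}(\Omega)\hookrightarrow L^{\frac{p(N-1)}{N-p}}(\partial\Omega)$, which are indeed compact because $\frac{pN}{N-p}<\frac{2N}{N-2}$ and $\frac{p(N-1)}{N-p}<\frac{2(N-1)}{N-2}$ for $p<2$ (and when $a$ or $b$ lies in $L^\infty$ the corresponding limit is even more immediate, using only compactness of $W^{1,2}(\Omega)\hookrightarrow L^2$).

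Next I would insert the normalisation into the Nehari identity. Dividing $\int_\Omega|\nabla u_n|^p\,dx+\int_\Omega|\nabla u_n|^2\,dx=\lambda\big(\int_\Omega a u_n^2\,dx+\int_{\partial\Omega} b u_n^2\,d\sigma\big)$ by $t_n^2$ gives $t_n^{p-2}\int_\Omega|\nabla v_n|^p\,dx+\int_\Omega|\nabla v_n|^2\,dx=\lambda\big(\int_\Omega a v_n^2\,dx+\int_{\partial\Omega} b v_n^2\,d\sigma\big)$. By Hölder's inequality $\int_\Omega|\nabla v_n|^p\,dx\le|\Omega|^{1-p/2}$, and since $p<2$ and $t_n\to\infty$ the first term vanishes in the limit. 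Writing $\ell:=\int_\Omega a v^2\,dx+\int_{\partial\Omega} b v^2\,d\sigma$, we obtain $\int_\Omega|\nabla v_n|^2\,dx\to\lambda\ell$. Combined with $1=\|v_n\|_{W^{1,2}(\Omega)}^2=\int_\Omega|\nabla v_n|^2\,dx+\int_\Omega v_n^2\,dx$ and $\int_\Omega v_n^2\,dx\to\int_\Omega v^2\,dx$, this yields $\int_\Omega v^2\,dx=1-\lambda\ell$; in particular $v\ne0$, for otherwise $\ell=0$ and $1=0$. This non-vanishing step is where the Nehari constraint really does its work: it is precisely what prevents the rescaled sequence from dispersing to zero. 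Being a non-zero member of $\mathscr{W}_2$, the function $v$ cannot be constant (by condition \eqref{eq:int-a+int-b_maior_zero}), hence $\int_\Omega|\nabla v|^p\,dx>0$.

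Finally I would extract the contradiction. From $v_n\rightharpoonup v$ in $W^{1,2}(\Omega)$ we get $\nabla v_n\rightharpoonup\nabla v$ in $L^2(\Omega;\mathbb{R}^N)$; since the functional $f\mapsto\int_\Omega|f|^p\,dx$ is convex and, $\Omega$ being bounded, continuous on $L^2(\Omega;\mathbb{R}^N)$, it is weakly lower semicontinuous, so $\liminf_n\int_\Omega|\nabla v_n|^p\,dx\ge\int_\Omega|\nabla v|^p\,dx>0$. Therefore $\int_\Omega|\nabla u_n|^p\,dx=t_n^p\int_\Omega|\nabla v_n|^p\,dx\ge\tfrac12 t_n^p\int_\Omega|\nabla v|^p\,dx$ for all large $n$, which tends to $+\infty$, contradicting $\sup_n\int_\Omega|\nabla u_n|^p\,dx<\infty$. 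I expect the delicate point to be the non-vanishing of $v$: one must check carefully that the bounded $L^p$-gradient genuinely annihilates the $t_n^{p-2}$-term in the rescaled Nehari identity, and then combine the weak continuity of the weighted terms with the unit-norm normalisation to force $v\neq0$; once that is secured, the convexity/lower-semicontinuity step closes the argument at once.
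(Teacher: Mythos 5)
Your proof is correct, but it takes a genuinely different route from the paper's. The paper normalizes by $\|u_n\|_{L^2(\Omega)}$ and splits into two cases: if $\sup_n\|u_n\|_{L^2}<\infty$ it bounds $\int_\Omega|\nabla u_n|^2\,dx$ directly from the Nehari identity, estimating the weighted terms by $\big(\int_\Omega|\nabla u_n|^p\,dx\big)^{2/p}$ via H\"older, the mean-zero reduction \eqref{eq:estimate-integral-of-au2+bu2} and Poincar\'e--Wirtinger (this is exactly the step that must be reworked in cases \ref{item:p_less_2B} and \ref{item:p_less_2C}); if $\|u_n\|_{L^2}\to\infty$ it shows the rescaled weak limit satisfies $\int_\Omega|\nabla v_0|^p\,dx=0$, hence is a constant, hence zero by Lemma \ref{lemma:W1p-igual-Vptil+R}, contradicting the unit $L^2$-normalization. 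You instead normalize by the full $W^{1,2}$-norm and run a single contradiction argument in the opposite direction: the identity $\lambda\ell+\int_\Omega v^2\,dx=1$, obtained by passing to the limit in the rescaled Nehari identity (where the $t_n^{p-2}$-term dies because $p<2$) together with compactness of the relevant embeddings, forces the weak limit $v$ to be \emph{nonzero}, hence non-constant in $\mathscr{W}_2$, hence $\int_\Omega|\nabla v|^p\,dx>0$ by weak lower semicontinuity, which blows up $\int_\Omega|\nabla u_n|^p\,dx=t_n^p\int_\Omega|\nabla v_n|^p\,dx$ against the hypothesis. Your version avoids the case split and the quantitative Step 1 estimate entirely, uses the hypothesis $\sup_n\int_\Omega|\nabla u_n|^p\,dx<\infty$ only at the very last line, and adapts uniformly to the $L^\infty$ cases; what it costs is the slightly heavier input of weak continuity of the weighted quadratic forms (compactness of $W^{1,2}(\Omega)\hookrightarrow L^{\frac{pN}{N-p}}(\Omega)$ and of the corresponding trace embedding), which you verify correctly, whereas the paper's Step 1 only needs the continuous embeddings of $W^{1,p}$. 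All the individual steps check out (in particular $\frac{pN}{N-p}<\frac{2N}{N-2}$ and $\frac{p(N-1)}{N-p}<\frac{2(N-1)}{N-2}$ precisely because $p<2$, and the H\"older exponents match the integrability classes of $a$ and $b$), so the argument is sound.
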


\begin{proof}
Let $(u_n)\subset \mathscr{N}_\lambda$ be as in the statement of Lemma \ref{lemma:sequence_on_Nehari-bounded_in_W12}. In particular,
\begin{equation}\label{eq:sequence_in_Nehari}
\int_\Omega|\nabla u_n|^p\, dx+\int_\Omega|\nabla u_n|^2\, dx =\lambda\int_\Omega a(x)u_n^2\, dx+\lambda\int_{\partial\Omega} b(x)u_n^2\, d\sigma. 
\end{equation}
We split the proof into two steps.

{\it Step 1.} Suppose $\sup_{n\in \mathbb{N}}\|u_n\|_{L^2}<\infty$. As in Lemma \ref{lemma:lambda1_positivo} we can estimate
\begin{align*}
&\int_\Omega|\nabla u_n|^2\, dx\\
&\leqslant \lambda \Big(\int_\Omega a(x)u_n^2\, dx+\int_{\partial\Omega} b(x)u_n^2\, d\sigma\Big)\\
&\leqslant \lambda\Big(C_3\|a\|_{L^{\frac{pN}{(p-2)N+2p}}(\Omega)}+
C_4\|b\|_{L^{\frac{p(N-1)}{(p-2)N+p}}(\partial\Omega)}\Big)\|u_n-\overline{u_n}\|_{W^{1,p}(\Omega)}^2\\
&\leqslant \lambda\Big(C_3\|a\|_{L^{\frac{pN}{(p-2)N+2p}}(\Omega)}+C_4\|b\|_{L^{\frac{p(N-1)}{(p-2)N+p}}(\partial\Omega)}\Big)(1+C^\text{P}_p)
\Big(\int_{\Omega}|\nabla u_n|^p\, dx\Big)^{\frac{2}{p}}.
\end{align*}
We can infer that $\sup_{n\in \mathbb{N}}\int_\Omega|\nabla u_n|^2\, dx<\infty$, thus $(u_n)$ is bounded in $W^{1,2}(\Omega)$ in this case.

{\it Step 2.} Suppose (after passing to a subsequence if necessary) that $\|u_n\|_{L^2(\Omega)}\to \infty$ as $n\to \infty$. Put 
$v_n:=\frac{u_n}{\|u_n\|_{L^2(\Omega)}}$. As in Step 1 above we can deduce that $(v_n)\subset \mathscr{W}_2$ is bounded in $W^{1,2}(\Omega)$. Thus there 
exists a $v_0\in \mathscr{W}_2$ such that $v_n\rightharpoonup v_0$ in $W^{1,2}(\Omega)$ (also in $W^{1,p}(\Omega)$, by continuous inclusion) 
and $v_n\to v_0$ in $L^{2}(\Omega)$.

Dividing \eqref{eq:sequence_in_Nehari} by $\|u_n\|_{L^2(\Omega)}^p$ we find
$$
\int_{\Omega}|\nabla v_n|^p\, dx =
\frac{\lambda\int_\Omega a(x)u_n^2\, dx+\lambda\int_{\partial\Omega} b(x)u_n^2\, d\sigma-\int_\Omega|\nabla u_n|^2\, dx}{\|u_n\|_{L^2(\Omega)}^p}
\to 0,\ \ \ \text{as}\ n\to \infty .
$$
Since $v_n\rightharpoonup v_0$ in $W^{1,p}(\Omega)$ we have
$$\int_{\Omega}|v_0|^p\, dx+\int_{\Omega}|\nabla v_0|^p\, dx\leqslant 
\liminf_{n\to \infty} \Big(\int_{\Omega}| v_n|^p\, dx+\int_{\Omega}|\nabla v_n|^p\, dx\Big),$$
which implies, since $v_n\to v_0$ in $L^{p}(\Omega)$, that
$$\int_{\Omega}|\nabla v_0|^p\, dx\leqslant \liminf_{n\to \infty} \int_{\Omega}|\nabla v_n|^p\, dx=0.$$
This, in combination with Poincaré-Wirtinger inequality, implies that $v_0$ is constant. In view of Lemma \ref{lemma:W1p-igual-Vptil+R} this constant is zero. Therefore we find that 
$v_n\to 0$ in $L^{2}(\Omega)$ but this contradicts the fact that $\|v_n\|_{L^2(\Omega)}=1$ for all $n\in \mathbb{N}$. Therefore $(u_n)$ must be bounded in 
$L^2(\Omega)$ and we are back to Step 1 above.
\end{proof}

\begin{lemma}
$m=\inf_{w\in \mathscr{N}_\lambda}\mathscr{I}_\lambda(w)>0$ and $m=\mathscr{I}_\lambda(u)$ for some $u\in \mathscr{N}_\lambda$.
\end{lemma}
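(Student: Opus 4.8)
The plan is to prove the two claims in turn.

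\emph{Proof that $m>0$.} On $\mathscr{N}_\lambda$ we have $\mathscr{I}_\lambda(u)=(\tfrac1p-\tfrac12)\int_\Omega|\nabla u|^p\,dx$ with $\tfrac1p-\tfrac12>0$, so it is enough to bound $\int_\Omega|\nabla u|^p\,dx$ from below uniformly on $\mathscr{N}_\lambda$. Given $u\in\mathscr{N}_\lambda$, the Nehari identity yields $\int_\Omega|\nabla u|^p\,dx\le\lambda\big(\int_\Omega a(x)u^2\,dx+\int_{\partial\Omega}b(x)u^2\,d\sigma\big)$; estimating the right-hand side as in Lemma~\ref{lemma:W1p-igual-Vptil+R} combined with Poincaré--Wirtinger when $\tfrac{2N}{N+1}<p<2$ (and as in Lemma~\ref{lemma:lambda1_positivo}, using $W^{1,2}(\Omega)=\mathscr{W}_2\oplus\mathbb{R}$, when $a$ and/or $b$ belong to $L^\infty$) produces a constant $D>0$, independent of $u$, with
\begin{multline*}
\int_\Omega|\nabla u|^p\,dx\le\lambda\Big(\int_\Omega a(x)u^2\,dx+\int_{\partial\Omega}b(x)u^2\,d\sigma\Big)\\
\le D\Big(\int_\Omega|\nabla u|^p\,dx\Big)^{2/p}.
\end{multline*}
Since $u\neq0$ and $u$ is not constant (a non-zero constant cannot lie in $\mathscr{W}_2$, by \eqref{eq:int-a+int-b_maior_zero}), Poincaré--Wirtinger gives $\int_\Omega|\nabla u|^p\,dx>0$, so dividing and using $1-\tfrac2p<0$ we obtain $\int_\Omega|\nabla u|^p\,dx\ge D^{-p/(2-p)}>0$ for every $u\in\mathscr{N}_\lambda$. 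Hence $m\ge(\tfrac1p-\tfrac12)D^{-p/(2-p)}>0$, and $m<\infty$ because $\mathscr{N}_\lambda\neq\emptyset$.

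\emph{Proof that $m$ is attained.} Let $(u_n)\subset\mathscr{N}_\lambda$ be a minimizing sequence, so $\int_\Omega|\nabla u_n|^p\,dx\to M:=m/(\tfrac1p-\tfrac12)$ and in particular $\sup_n\int_\Omega|\nabla u_n|^p\,dx<\infty$; by Lemma~\ref{lemma:sequence_on_Nehari-bounded_in_W12}, $(u_n)$ is bounded in $W^{1,2}(\Omega)$. Passing to a subsequence, $u_n\rightharpoonup u$ in $W^{1,2}(\Omega)$ (hence in $W^{1,p}(\Omega)$) with $u\in\mathscr{W}_2$. Here $1<p<2$ enters essentially: then $p^*=\tfrac{pN}{N-p}<\tfrac{2N}{N-2}=2^*$, and likewise $\tfrac{p(N-1)}{N-p}<\tfrac{2(N-1)}{N-2}$, so $W^{1,2}(\Omega)$ embeds \emph{compactly} into $L^{p^*}(\Omega)$ and into $L^{p(N-1)/(N-p)}(\partial\Omega)$, which by Hölder's inequality and the integrability hypotheses on $a,b$ gives $\int_\Omega a(x)u_n^2\,dx\to\int_\Omega a(x)u^2\,dx$ and $\int_{\partial\Omega}b(x)u_n^2\,d\sigma\to\int_{\partial\Omega}b(x)u^2\,d\sigma$ (in the $L^\infty$ sub-cases one uses instead the compactness of $W^{1,2}(\Omega)\hookrightarrow L^2(\Omega)$ and $W^{1,2}(\Omega)\hookrightarrow L^2(\partial\Omega)$). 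Moreover $u\neq0$: from the Nehari identity and the bound just proved, $\lambda\big(\int_\Omega a(x)u_n^2\,dx+\int_{\partial\Omega}b(x)u_n^2\,d\sigma\big)\ge\int_\Omega|\nabla u_n|^p\,dx\ge D^{-p/(2-p)}$, and letting $n\to\infty$ forces $\int_\Omega a(x)u^2\,dx+\int_{\partial\Omega}b(x)u^2\,d\sigma>0$; in particular $u$ is not constant, so $\int_\Omega|\nabla u|^p\,dx>0$.

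To finish I would run the fibering argument. Passing to the limit in the Nehari identity for $u_n$ gives $\int_\Omega|\nabla u_n|^2\,dx\to L:=\lambda\big(\int_\Omega a(x)u^2\,dx+\int_{\partial\Omega}b(x)u^2\,d\sigma\big)-M$, while weak lower semicontinuity yields $\int_\Omega|\nabla u|^p\,dx\le M$ and $\int_\Omega|\nabla u|^2\,dx\le L$. Since $m>0$, also $M>0$, hence $\lambda\big(\int_\Omega a(x)u^2\,dx+\int_{\partial\Omega}b(x)u^2\,d\sigma\big)-\int_\Omega|\nabla u|^2\,dx\ge M>0$, so — exactly as in \eqref{eq:solve_for_t-belong_to_Nehari} — there is a unique $t>0$ with $tu\in\mathscr{N}_\lambda$, characterised by
\begin{multline*}
t^{p-2}\int_\Omega|\nabla u|^p\,dx=\lambda\Big(\int_\Omega a(x)u^2\,dx+\int_{\partial\Omega}b(x)u^2\,d\sigma\Big)-\int_\Omega|\nabla u|^2\,dx\\
\ge M\ge\int_\Omega|\nabla u|^p\,dx .
\end{multline*}
Because $p-2<0$ this forces $t\le1$, whence $\mathscr{I}_\lambda(tu)=(\tfrac1p-\tfrac12)t^p\int_\Omega|\nabla u|^p\,dx\le(\tfrac1p-\tfrac12)M=m$; combined with $tu\in\mathscr{N}_\lambda$ and $m=\inf_{\mathscr{N}_\lambda}\mathscr{I}_\lambda$, equality holds, i.e.\ $\mathscr{I}_\lambda(tu)=m$ with $tu\in\mathscr{N}_\lambda$, which is the claim. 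I expect the main obstacle to be the passage to the limit in the interior and boundary weight terms: the only a priori bound on $(u_n)$ is in $W^{1,2}(\Omega)$, while the weights $a,b$ live in Lebesgue classes paired with $W^{1,p}(\Omega)$, and the whole argument hinges on $p<2$ making those pairings subcritical — hence compact — for $W^{1,2}(\Omega)$. A secondary subtlety is the fibering inequality $t\le1$, which is precisely what prevents the rescaled weak limit from sitting strictly above the level $m$.
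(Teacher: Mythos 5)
Your argument is correct under the hypotheses of Theorem \ref{thm:main}\ref{item:p_less_2A}, and it differs from the paper's proof in two instructive ways. For the positivity of $m$ the paper argues by contradiction: assuming $m=0$, it normalizes a minimizing sequence by $\|u_n\|_{L^2(\Omega)}$ and contradicts $\|v_n\|_{L^2(\Omega)}=1$, reusing the mechanism of Step~2 of Lemma \ref{lemma:sequence_on_Nehari-bounded_in_W12}. You instead extract the quantitative uniform lower bound $\int_\Omega|\nabla u|^p\,dx\geqslant D^{-p/(2-p)}$ on $\mathscr{N}_\lambda$ from the self-improving inequality $X\leqslant DX^{2/p}$ with $1-\tfrac{2}{p}<0$; this is cleaner and yields an explicit lower bound for $m$, and it is legitimate in case \ref{item:p_less_2A} because there the $W^{1,p}$ Sobolev and trace embeddings together with Poincar\'e--Wirtinger and estimate \eqref{eq:estimate-integral-of-au2+bu2} do give $\int_\Omega a u^2\,dx+\int_{\partial\Omega}bu^2\,d\sigma\leqslant D\big(\int_\Omega|\nabla u|^p\,dx\big)^{2/p}$ on $\mathscr{W}_2\subset\mathscr{W}_p$. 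For the attainment, the paper asserts that the weak limit itself lies on $\mathscr{N}_\lambda$ and excludes strict inequality via the rescaling contradiction $m\leqslant t^p m<m$; you accept that the limit may only satisfy the inequality and show directly that the rescaled limit $tu$, with $t\leqslant 1$ forced by $t^{p-2}\int_\Omega|\nabla u|^p\,dx\geqslant M\geqslant\int_\Omega|\nabla u|^p\,dx$, realizes the infimum. Both routes are sound; yours is a mild streamlining, and your treatment of the convergence of the weight terms via the compact embeddings $W^{1,2}(\Omega)\hookrightarrow L^{pN/(N-p)}(\Omega)$ and $W^{1,2}(\Omega)\hookrightarrow L^{p(N-1)/(N-p)}(\partial\Omega)$ is exactly what the paper leaves implicit.

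There is, however, a genuine restriction in your first step. The displayed inequality $\int_\Omega au^2\,dx+\int_{\partial\Omega}bu^2\,d\sigma\leqslant D\big(\int_\Omega|\nabla u|^p\,dx\big)^{2/p}$ is false in the sub-cases \ref{item:p_less_2B} and \ref{item:p_less_2C}: with $b\in L^\infty(\partial\Omega)$ it would require $W^{1,p}(\Omega)\hookrightarrow L^2(\partial\Omega)$, which fails for $p<\tfrac{2N}{N+1}$, and with $a\in L^\infty(\Omega)$ it would require $W^{1,p}(\Omega)\hookrightarrow L^2(\Omega)$, which fails for $p<\tfrac{2N}{N+2}$ (a concentrating bump, corrected to have zero mean, has bounded $\|\nabla u\|_{L^p}$ and arbitrarily large $\|u\|_{L^2}$). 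Your parenthetical repair ``as in Lemma \ref{lemma:lambda1_positivo}'' only produces a bound by $\int_\Omega|\nabla u|^2\,dx$, which combined with the Nehari identity gives an \emph{upper} bound $\int_\Omega|\nabla u|^p\,dx\leqslant(\lambda C-1)\int_\Omega|\nabla u|^2\,dx$ and no uniform lower bound on $\int_\Omega|\nabla u|^p\,dx$. Consequently your proof that $m>0$ (and the part of Step~2 where you invoke the uniform bound to conclude $u\neq0$) does not go through in those sub-cases, whereas the lemma is needed for all of Theorem \ref{thm:main}\ref{item:p_less_2}. There one has to fall back on the paper's contradiction and normalization argument, which uses only the conclusion of Lemma \ref{lemma:sequence_on_Nehari-bounded_in_W12} as re-established via \eqref{eq:estimate-trace2-lessL2+grad2}; the $u\neq0$ step can likewise be replaced by the observation that $u=0$ would force $\int_\Omega|\nabla u_n|^p\,dx\to0$ and hence $m=0$.
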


\begin{proof}
We split the proof into two steps.

{\it Step 1.} First we show that $m>0$. Otherwise, suppose $m=0$ and let $(u_n)\subset \mathscr{N}_\lambda$ be a minimizing sequence, so that 
$\mathscr{I}_\lambda(u_n)\to 0$ as $n\to \infty$. From \eqref{eq:I_on_Nehari} we can infer that
\begin{multline}\label{I-minimizing_on_Nehari-goes-to-zero}
0\leqslant \frac{\lambda}{2}\int_\Omega a(x)u_n^2\, dx + \frac{\lambda}{2}\int_{\partial\Omega} b(x)u_n^2\, d\sigma-\frac{1}{2}\int_\Omega|\nabla u_n|^2\, dx\\ 
=\frac{1}{p}\int_\Omega|\nabla u_n|^p\, dx \to 0\ \ \ \text{as}\ n\to \infty .
\end{multline}
By Lemma \ref{lemma:sequence_on_Nehari-bounded_in_W12}, $(u_n)$ is bounded in $W^{1,2}(\Omega)$, thus $u_n\rightharpoonup u_0$ in 
$W^{1,2}(\Omega)$ (and also weakly in $W^{1,p}(\Omega)$) and $u_n\to u_0$ in $L^{2}(\Omega)$ (and in $L^p(\Omega)$) for some $u_0\in \mathscr{W}_2$. But then
$$\int_{\Omega}|\nabla u_0|^p\, dx\leqslant \liminf_{n\to \infty} \int_{\Omega}|\nabla u_n|^p\, dx=0,$$
and, as in the proof Lemma \ref{lemma:sequence_on_Nehari-bounded_in_W12}, we can conclude that $u_0=0$. Moreover, we can deduce that, for the sequence 
$v_n:=\frac{u_n}{\|u_n\|_{L^2(\Omega)}}$, there exists $v_0\in \mathscr{W}_2$ such that $v_n\rightharpoonup v_0$ in $W^{1,2}(\Omega)$ and in 
$W^{1,p}(\Omega)$, and $v_n\to v_0$ in $L^{2}(\Omega)$. Dividing Eq. \eqref{I-minimizing_on_Nehari-goes-to-zero} by $\|u_n\|_{L^2(\Omega)}^p$ we find
\begin{multline}
\frac{1}{p}\int_\Omega|\nabla v_n|^p\, dx=
\|u_n\|_{L^2(\Omega)}^{2-p}\Big(\frac{\lambda}{2}\int_\Omega a(x)v_n^2\, dx + \frac{\lambda}{2}\int_{\partial\Omega} b(x)v_n^2\, d\sigma\\
-\frac{1}{2}\int_\Omega|\nabla v_n|^2\, dx \Big)\to 0\ \ \ \text{as}\ n\to \infty
\end{multline}
since the expression between parentheses is bounded. Again we can deduce that $v_0=0$, which is absurd.

{\it Step 2.} Now we show that $m=\mathscr{I}_\lambda(u)$ for some $u\in \mathscr{N}_\lambda$. Let $(u_n)\subset \mathscr{N}_\lambda$ be a minimizing sequence, 
so that $\mathscr{I}_\lambda(u_n)\to m$ as $n\to \infty$. In particular, the sequence $(u_n)$ satisfies Eq. \eqref{eq:sequence_in_Nehari} and is bounded in 
$W^{1,2}(\Omega)$ by Lemma \ref{lemma:sequence_on_Nehari-bounded_in_W12}, so that $u_n\rightharpoonup u_1$ in $W^{1,2}(\Omega)$ 
(and in $W^{1,p}(\Omega)$) and $u_n\to u_1$ in $L^{2}(\Omega)$ for some element $u_1\in \mathscr{W}_2$. We claim $u_1\in \mathscr{N}_\lambda$ and 
$\mathscr{I}_\lambda(u_1)=m$. By passing to limit as $n\to \infty$ in Eq. 
\eqref{eq:sequence_in_Nehari} we find
\begin{equation}
\int_\Omega|\nabla u_1|^p\, dx+\int_\Omega|\nabla u_1|^2\, dx \leqslant\lambda\int_\Omega a(x)u_1^2\, dx+\lambda\int_{\partial\Omega} b(x)u_1^2\, d\sigma. 
\end{equation}
Moreover, $u_1\neq 0$; otherwise we would have $u_n\to 0$ in $L^2(\Omega)$ and Eq. \eqref{eq:sequence_in_Nehari} would imply 
$\int_{\Omega}|\nabla u_n|^p\, dx\to 0$ which, as in Step 1, would lead to a contradiction. 

If identity holds in the inequality above (and we claim it does) then we have $u_1\in \mathscr{N}_\lambda$ and the proof is complete. Otherwise, that is, 
if strict inequality holds in the above inequality, then we have $tu_1\in \mathscr{N}_\lambda$ for some $t\in (0,1)$; in fact, such a number can 
be obtained by solving Eq. \eqref{eq:solve_for_t-belong_to_Nehari} with $v_\lambda$ replaced by $u_1$ and, from its explicit expression, the 
aforementioned strict inequality garantees that it lies between 0 and 1. But then
$$0<m\leqslant \mathscr{I}_\lambda (tu_1)=t^p\mathscr{I}_\lambda(u_1)\leqslant t^p\liminf_{n\to \infty}\mathscr{I}_\lambda(u_n)=t^pm<m ,$$
which is a contradiction.
\end{proof}

\begin{proposition}\label{prop:eigenvalues-p_less_2}
Every $\lambda\in (\mu_1(p),\infty)$ is an eigenvalue of Problem \eqref{eq:eigenvalue_problem-p2_laplacian}.
\end{proposition}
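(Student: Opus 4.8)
The plan is to show that the minimizer $u\in\mathscr{N}_\lambda$ furnished by the preceding lemma is a genuine (unconstrained) critical point of $\mathscr{I}_\lambda$ on $\mathscr{W}_2$, and then, exactly as in Section~\ref{sec:proof-assertion_a}, to upgrade this to a critical point on all of $W^{1,2}(\Omega)$ using the splitting $W^{1,2}(\Omega)=\mathscr{W}_2\oplus\mathbb{R}$; by Definition~\ref{def:eigenvalue-eigenfunction-p2_laplacian-p_menor_2} this says precisely that $\lambda$ is an eigenvalue. The mechanism is the standard Lagrange multiplier argument on the Nehari manifold.

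First I would write $\mathscr{N}_\lambda=\{w\in\mathscr{W}_2\setminus\{0\}:G_\lambda(w)=0\}$, where $G_\lambda(w)=\langle\mathscr{I}'_\lambda(w),w\rangle$ is the $C^1$ functional
\[
G_\lambda(w)=\int_\Omega|\nabla w|^p\,dx+\int_\Omega|\nabla w|^2\,dx-\lambda\int_\Omega a(x)w^2\,dx-\lambda\int_{\partial\Omega}b(x)w^2\,d\sigma .
\]
Differentiating and pairing against $w$ itself, then substituting the Nehari identity $\int_\Omega|\nabla w|^p+\int_\Omega|\nabla w|^2=\lambda\int_\Omega a(x)w^2\,dx+\lambda\int_{\partial\Omega}b(x)w^2\,d\sigma$, one obtains the clean formula
\[
\langle G_\lambda'(w),w\rangle=(p-2)\int_\Omega|\nabla w|^p\,dx\qquad(w\in\mathscr{N}_\lambda).
\]
Since $1<p<2$ and, by \eqref{eq:I_on_Nehari}, $\int_\Omega|\nabla w|^p\,dx=\frac{2p}{2-p}\,\mathscr{I}_\lambda(w)\geqslant\frac{2p}{2-p}\,m>0$ for all $w\in\mathscr{N}_\lambda$, the right-hand side is strictly negative. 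In particular $G_\lambda'(w)\neq 0$ everywhere on $\mathscr{N}_\lambda$, so $\mathscr{N}_\lambda$ is a $C^1$-submanifold of $\mathscr{W}_2$ and every one of its points is a regular point of the constraint.

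With regularity in hand, the Lagrange multiplier theorem applied at the minimizer $u$ yields $\theta\in\mathbb{R}$ with $\mathscr{I}'_\lambda(u)=\theta\,G_\lambda'(u)$ in $(\mathscr{W}_2)^*$. Pairing against $u$ and using $\langle\mathscr{I}'_\lambda(u),u\rangle=0$ (because $u\in\mathscr{N}_\lambda$) together with the displayed formula gives $0=\theta(p-2)\int_\Omega|\nabla u|^p\,dx$; as the integral is positive, $\theta=0$, hence $\langle\mathscr{I}'_\lambda(u),\phi\rangle=0$ for every $\phi\in\mathscr{W}_2$. Testing the derivative against a constant $\phi\equiv c$ leaves only $-\lambda c\big(\int_\Omega a(x)u\,dx+\int_{\partial\Omega}b(x)u\,d\sigma\big)=0$, valid since $u\in\mathscr{W}_2$; so by the decomposition $W^{1,2}(\Omega)=\mathscr{W}_2\oplus\mathbb{R}$ we conclude $\langle\mathscr{I}'_\lambda(u),\phi\rangle=0$ for all $\phi\in W^{1,2}(\Omega)$, i.e.\ $u$ is an eigenfunction associated with $\lambda$.

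I expect the only subtle point to be exactly the regularity claim above — that the minimizer does not land on a singular point of $G_\lambda$, which is precisely what makes the Nehari manifold a natural constraint here. This is where the non-homogeneity $p\neq 2$ is essential: the identity $\langle G_\lambda'(w),w\rangle=(p-2)\int_\Omega|\nabla w|^p\,dx$ degenerates at $p=2$, consistent with Theorem~\ref{thm:main} failing there. All the remaining ingredients (non-emptiness of $\mathscr{N}_\lambda$, positivity and attainment of $m$, and the $W^{1,2}$-boundedness in Lemma~\ref{lemma:sequence_on_Nehari-bounded_in_W12}) are already available from the earlier lemmas, so only this bookkeeping remains.
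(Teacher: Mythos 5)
Your argument is correct, and it reaches the same conclusion by the same overall strategy as the paper (minimize $\mathscr{I}_\lambda$ on $\mathscr{N}_\lambda$, show the minimizer is a free critical point on $\mathscr{W}_2$, then pass to all of $W^{1,2}(\Omega)$ via $W^{1,2}(\Omega)=\mathscr{W}_2\oplus\mathbb{R}$), but the step ``natural constraint'' is implemented differently. You invoke the Lagrange multiplier theorem: you check that $\mathscr{N}_\lambda$ is a regular level set of $G_\lambda(w)=\langle\mathscr{I}_\lambda'(w),w\rangle$ by the identity $\langle G_\lambda'(w),w\rangle=(p-2)\int_\Omega|\nabla w|^p\,dx<0$ on $\mathscr{N}_\lambda$ (the computation is right, and the positivity of $\int_\Omega|\nabla w|^p\,dx$ already follows from $w\in\mathscr{W}_2\setminus\{0\}$ and Lemma \ref{lemma:W1p-igual-Vptil+R}, without appealing to $m>0$), and then kill the multiplier $\theta$ by pairing with $u$. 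The paper instead avoids the abstract multiplier rule: for each direction $v\in\mathscr{W}_2$ it builds an explicit $C^1$ curve $s\mapsto t(s)(u+sv)$ lying in $\mathscr{N}_\lambda$ (using that Eq. \eqref{eq:solve_for_t-belong_to_Nehari} can be solved explicitly for $t$, precisely because $p\neq 2$), and differentiates $\gamma(s)=\mathscr{I}_\lambda(t(s)(u+sv))$ at its interior minimum $s=0$, the term $t'(0)\langle\mathscr{I}_\lambda'(u),u\rangle$ dropping out by the Nehari identity. The two routes are essentially dual: your approach is shorter and standard, and makes transparent where $p\neq 2$ enters (the constraint degenerates at $p=2$); the paper's is more self-contained, trading the Banach-space Lagrange multiplier theorem for the elementary solvability and differentiability of $t(s)$. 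Your final reduction to $W^{1,2}(\Omega)$ by testing against constants is exactly what the paper does (implicitly, as in Proposition \ref{prop:eigenvalues_bigger_than_lambda1}).
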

\begin{proof}
Fix $\lambda> \mu_1(p)$. Let $u\in \mathscr{N}_\lambda$ be such that $\mathscr{I}_\lambda(u)=m$. In particular
\begin{equation}
\int_\Omega|\nabla u|^2\, dx <\lambda\int_\Omega a(x)u^2\, dx+\lambda\int_{\partial\Omega} b(x)u^2\, d\sigma.
\end{equation}
We claim that for every $v\in \mathscr{W}_2$ there exists $\delta>0$ such that 
\begin{equation}
\int_\Omega|\nabla (u+sv)|^2\, dx <\lambda\int_\Omega a(x)(u+sv)^2\, dx+\lambda\int_{\partial\Omega} b(x)(u+sv)^2\, d\sigma
\end{equation}
for all $s\in (-\delta ,\delta)$. In fact, the inequality holds for $s=0$ and both sides above are continuous functions of $s$. Now, by solving 
Eq. \eqref{eq:solve_for_t-belong_to_Nehari} with $v_\lambda$ replaced by $u+sv$ we are able to find $t(s)>0$ satisfying 
$t(s)(u+sv)\in \mathscr{N}_\lambda$ for all $s\in (-\delta ,\delta)$. Besides, $t(s)$ is differentiable (this can be seen from the explicit 
expression for $t(s)$ after solving Eq. \eqref{eq:solve_for_t-belong_to_Nehari}) and $t(0)=1$.

Obviously, the map $\gamma:(-\delta ,\delta)\to \mathbb{R}$ defined by
$$\gamma(s):=\mathscr{I}_\lambda(t(s)(u+sv))$$
belongs to $C^1(-\delta,\delta)$, satisfies $\gamma(0)\leqslant \gamma(s)$ for all $s\in (-\delta,\delta)$, and then
$$0=\gamma'(0)=\langle \mathscr{I}_\lambda'(t(0)u),t'(0)u+t(0)v\rangle =\langle \mathscr{I}_\lambda(u),v\rangle .$$
Therefore $\lambda$ is an eigenvalue.
\end{proof}

\begin{proof}[Proof of Theorem \ref{thm:main}\ref{item:p_less_2A}]
As it was already pointed out, $\lambda=0$ is an eigenvalue. Therefore, the conclusion follows immediately from Propositions 
\ref{prop:no_eigenvalues-p_less_2} and \ref{prop:eigenvalues-p_less_2}.
\end{proof}

Now we turn our attention to the proof of Assertions \ref{item:p_less_2B} and \ref{item:p_less_2C}. Let us start by observing that as 
$p\downarrow \frac{2N}{N+1}$ then the integrability exponent attached to $b$ blows up. The same applies to $a$ 
when $p\downarrow \frac{2N}{N+2}$. An inspection in the proofs in this section reveals that the only point that needs to be addressed here 
is Step 1 in the proof of Lemma \ref{lemma:sequence_on_Nehari-bounded_in_W12}. To carry out the necessary estimates we use the following 
well-known estimate: for all $\varepsilon>0$ there exists a constant $c_\varepsilon\geqslant 0$ such that
\begin{equation}\label{eq:estimate-trace2-lessL2+grad2}
 \int_{\partial\Omega} u^2\, d\sigma\leqslant 
 \varepsilon \int_\Omega|\nabla u|^2\, dx+c_\varepsilon\int_\Omega u^2\, dx \ \ \ (u\in W^{1,2}(\Omega)).
\end{equation}
It can be proved either indirectly, first for smooth functions $u\in C^1(\overline{\Omega})$ (see e.g. \cite[p. 177]{friedman1983partial}) and then 
for general elements $u\in W^{1,2}(\Omega)$ by approximation, or directly by invoking the compactness of the 
trace (see e.g. \cite[Lemma 1]{afrouzi1999principal}).

\begin{proof}[Proof of Theorem \ref{thm:main}\ref{item:p_less_2B}]
From \eqref{eq:sequence_in_Nehari} we have
\begin{align*}
&\int_\Omega|\nabla u_n|^2\, dx\\
&\leqslant \lambda \Big(\int_\Omega a(x)u_n^2\, dx+\int_{\partial\Omega} b(x)u_n^2\, d\sigma\Big)\\
&\leqslant \lambda\Big(\|a\|_{L^{\frac{pN}{(p-2)N+2p}}(\Omega)}\Big(\int_\Omega |u_n|^{\frac{pN}{N-p}}\, dx\Big)^{\frac{2(N-p)}{pN}}
+\|b\|_{L^{\infty}(\partial\Omega)}\int_{\partial\Omega}|u_n|^2\, d\sigma\Big).
\end{align*}
Since $p>\frac{2N}{N+2}$, $L^{\frac{pN}{N-p}}(\Omega)$ embedds into $L^2(\Omega)$ which, in combination with estimate 
\eqref{eq:estimate-trace2-lessL2+grad2}, allows us to estimate $\int_\Omega|\nabla u_n|^2\, dx$ by $\int_\Omega|u_n|^2\, dx$.
\end{proof}

\begin{proof}[Proof of Theorem \ref{thm:main}\ref{item:p_less_2C}]
From \eqref{eq:sequence_in_Nehari} we have
\begin{align*}
\int_\Omega|\nabla u_n|^2\, dx&\leqslant \lambda\Big(\|a\|_{L^{\infty}(\Omega)}\int_{\Omega}|u_n|^2\, dx
+\|b\|_{L^{\infty}(\partial\Omega)}\int_{\partial\Omega}|u_n|^2\, d\sigma\Big).
\end{align*}
Then proceed as in the previous proof.
\end{proof}

We note as a curious fact that the above proofs does not require the hypothesis `$\sup_{n\in \mathbb{N}}\int_\Omega |\nabla u_n|^p\, dx<\infty$'. 
Actually, the same is true for Step 1 in the proof of Lemma \ref{lemma:sequence_on_Nehari-bounded_in_W12} itself, since 
$L^{\frac{p(N-1)}{N-p}}(\partial\Omega)$ embedds into $L^2(\partial\Omega)$ for $p>\frac{2N}{N+1}$.

\bibliographystyle{plain}
\bibliography{biblio-eigenvalue}

\end{document}